\newcommand{\comment}[1]{}
\numberwithin{equation}{section}
\newtheorem{theorem}{Theorem}[section]
\newtheorem{lemma}{Lemma}[section]
\newtheorem{proposition}{Proposition}[section]
\newtheorem{corollary}[theorem]{Corollary}
\newtheorem{remark}{Remark}[section]
\def\v{{{\rm v}}}
\def\e{{\mathrm{e}}}
\def\eps{\varepsilon}
\def\p{\partial}
\def\[{{\Big[}}
\def\]{{\Big]}}
\def\<{{\langle}}
\def\>{{\rangle}}
\def\({{\Big(}}
\def\){{\Big)}}
\def\bx{{\mathbf{x}}}
\def\dif{{\mathord{{\rm d}}}}
\def\dis{{\mathord{{\rm \bf d}}}}
\def\no{\nonumber}
\def\={&\!\!=\!\!&}
\def\bt{\begin{theorem}}
\def\et{\end{theorem}}
\def\bl{\begin{lemma}}
\def\el{\end{lemma}}
\def\br{\begin{remark}}
\def\er{\end{remark}}
\def\bd{\begin{definition}}
\def\ed{\end{definition}}
\def\bp{\begin{proposition}}
\def\ep{\end{proposition}}
\def\bc{\begin{corollary}}
\def\ec{\end{corollary}}
\def\bx{\begin{Examples}}
\def\ex{\end{Examples}}
\def\cF{{\mathcal F}}
\def\cJ{{\mathcal J}}
\def\cL{{\mathcal L}}
\def\cM{{\mathcal M}}
\def\cT{{\mathcal T}}
\def\mC{{\mathbb C}}
\def\mD{{\mathbb D}}
\def\mE{{\mathbb E}}
\def\mH{{\mathbb H}}
\def\mI{{\mathbb I}}
\def\mL{{\mathbb L}}
\def\mN{{\mathbb N}}
\def\mP{{\mathbb P}}
\def\mR{{\mathbb R}}
\def\mS{{\mathbb S}}
\def\mV{{\mathbb V}}
\def\mW{{\mathbb W}}
\def\bP{{\mathbf P}}
\def\sA{{\mathscr A}}
\def\sC{{\mathscr C}}
\def\sF{{\mathscr F}}
\def\sH{{\mathscr H}}
\def\sI{{\mathscr I}}
\def\sK{{\mathscr K}}
\def\sL{{\mathscr L}}
\def\sS{{\mathscr S}}
\def\sU{{\mathscr U}}
\def\sV{{\mathscr V}}
\def\sW{{\mathscr W}}
\def\geq{\geqslant}
\def\leq{\leqslant}
\def\div{\mathord{{\rm div}}}
\def\bP{{\mathbf P}}
\def\bE{{\mathbf E}}
\newtheorem{definition}{Definition}
\def\mN{{\mathbb N}}
\def\mR{{\mathbb R}}
\numberwithin{equation}{section}
\begin{document}

\title{H\"ormander's hypoelliptic theorem for nonlocal operators}

\author{Zimo Hao, Xuhui Peng and Xicheng Zhang}

\address{Zimo Hao:
School of Mathematics and Statistics, Wuhan University,
Wuhan, Hubei 430072, P.R.China\\
Email: zimohao@whu.edu.cn
 }

 \address{Xuhui Peng:
School of Mathematics and Statistics, Hunan Normal University,
Changsha, Hunan, P.R.China\\
Email: xhpeng@hunnu.edu.cn 
 }

\address{Xicheng Zhang:
School of Mathematics and Statistics, Wuhan University,
Wuhan, Hubei 430072, P.R.China\\
Email: XichengZhang@gmail.com
 }

\begin{abstract}
In this paper we show the H\"ormander hypoelliptic theorem for nonlocal operators by a purely probabilistic method:  the Malliavin calculus.
Roughly speaking, under general H\"ormander's Lie bracket conditions, we show the regularization effect of discontinuous L\'evy noises
for possibly degenerate stochastic differential equations with jumps. To treat the large jumps, we use the perturbation argument together with 
interpolation techniques and some short time asymptotic estimates of the semigroup. As an application, 
we show the existence of fundamental solutions for operator $\p_t-\sK$, where $\sK$ is 
the nonlocal kinetic operator:
$$
\sK  f(x,{\rm v}):={\rm p.v}\int_{\mR^d}(f(x,{\rm v}+w)-f(x,{\rm v}))\frac{\kappa(x,{\rm v},w)}{|w|^{d+\alpha}}\dif w
+{\rm v}\cdot\nabla_x f(x,{\rm v})+b(x,{\rm v})\cdot\nabla_{\rm v} f(x,{\rm v}).
$$
Here $\kappa_0^{-1}\leq \kappa(x,{\rm v},w)\leq\kappa_0$ belongs to $C^\infty_b(\mR^{3d})$ and is symmetric in $w$, p.v. stands for the Cauchy principal value,
and $b\in C^\infty_b(\mR^{2d};\mR^d)$.

\bigskip
\noindent 
\textbf{Keywords}: 
H\"ormander's conditions, Malliavin calculus, Hypoellipticity, Nonlocal operators\\

\noindent
 {\bf AMS 2010 Mathematics Subject Classification:}  60H07, 60H10, 60H30.
 
 \end{abstract}

\thanks{
X.P. is partially supported by NSFC (No.11501195) and a Scientific Research Fund of Hunan
Provincial Education Department (No.17C0953).
X.Z. is partially supported by NNSFC grant of China (No. 11731009) and the DFG through the CRC 1283 
``Taming uncertainty and profiting from randomness and low regularity in analysis, stochastics and their applications''.  }

\maketitle \rm

\section{Introduction}

\noindent{\bf 1.1 Introduction.} Let $\sA$ be a differential operator in $\mR^d$ with smooth coefficients. Hypoellipticity in the theory of PDEs means that 
for any distribution $u$ and open subset $U\subset\mR^d$, 
$$
\sA u|_{U}\in C^\infty(U)\Rightarrow u\in C^\infty(U).
$$
Let $A_0,A_1,\cdots A_d$ be $d+1$-differential operators of first order (or vector fields) with smooth coefficients and $c$ a smooth function.
The classical H\"ormander's hypoelliptic theorem tells us that if 
$$
\mbox{$\sV:=$\rm Lie($A_1,\cdots,A_d, [A_0,A_1],\cdots,[A_0,A_d]$)=$\mR^d$},
$$
where $[A_0,A_k]:=A_0A_k-A_kA_0$ is the Lie bracket, and $\sV$ stands for the Lie algebra generated by 
vector fields $A_k, [A_0,A_k], k=1,\cdots,d$,
then $\sA:=\sum_{k=1}^dA_k^2+A_0+c-\frac{\p}{\p t}$ is hypoelliptic in $\mR^{d+1}$ (cf. \cite{Ho}, \cite{Wi}, \cite{Ha} and \cite{Kr}, etc.).

\medskip

Consider the following It\^o's type SDE
\begin{align}\label{SDE-1}
\dif X_t=b(X_t)\dif t+\sigma_k(X_t)\dif W^k_t,\ X_0=x\in\mR^d,
\end{align}
where $W$ is a $d$-dimensional standard Brownian motion and $b,\sigma_k:\mR^d\to\mR^d$, $k=1,\cdots, d$ are $C^\infty_b$-functions.
Here and below we use Einstein's summation convention: If an index appears twice in a product, then it will be summed automatically.
We now define $d+1$-vector fields by
\begin{align}\label{VEC}
A_0:=\left(b^i-\tfrac{1}{2}\sigma^j_k\partial_j\sigma_k^i\right)\p_{i},\ \ A_k:=\sigma^i_k\p_{i},\ k=1,\cdots,d,
\end{align}
where $\p_i:=\p_{x_i}=\frac{\p}{\p x_i}$.
Let $\mu_t(x,\dif y)$ be the distributional density of the unique solution $X_t(x)$ of SDE \eqref{SDE-1}.
By It\^o's formula, one sees that in the distributional sense,
$$
\p_t \mu_t(x,\cdot)=\left(\tfrac{1}{2}A^2_k+A_0\right)^*\mu_t(x,\cdot),
$$
where the asterisk stands for the adjoint operator. Notice that
$$
\left(\tfrac{1}{2}A^2_k+A_0\right)^*=\tfrac{1}{2}A^2_k+\widehat A_0+c,
$$
where $\widehat A_0:=(\sigma^i_k\p_{j}\sigma^j_k)\p_{i}-A_0$ and $c:=\p_i\p_j(\sigma^i_k\sigma^j_k)/2-\div b$.
By H\"ormander's hypoelliptic theorem, if
$$
\mbox{$\sV=$\rm Lie($A_1,\cdots,A_d, [\widehat A_0,A_1],\cdots,[\widehat A_0,A_d]$)=$\mR^d$},
$$
then $\mu_t(x,\cdot)$ admits a smooth density (see \cite{Wi}). In \cite{Ma}, Malliavin provides a purely probabilistic proof for the above result by
infinitely dimensional stochastic calculus of variations invented by him, which is now called the Malliavin calculus (see \cite{Nu}). 
Since then, the Malliavin calculus has been developed very well, and emerged in many fields such as financial, control, filtering, and so on. 
Notice that in \cite{Ha}, Hairer presents a short and self-contained proof for  H\"ormander's theorem based on Malliavin's idea.

\medskip

In this paper we are concerned with the following SDE with jumps:
\begin{align}\label{SDE}
\dif X_t=b(X_t)\dif t+\sigma_k(X_t)\dif W^k_t+\int_{\mR^d_0}g(X_{t-},z)\widetilde{N}(\dif t,\dif z),\ X_0=x\in\mR^d,
\end{align}
where $\mR^d_0:=\mR^d\setminus\{0\}$, and $N(\dif t,\dif z)$ is a Poisson random measure with intensity $\dif t\nu(\dif z)$,
$$
\widetilde N(\dif t,\dif z):=N(\dif t,\dif z)-\dif t\nu(\dif z),
$$
and $\nu$ is a symmetric L\'evy measure over $\mR^d_0$, and $b,\sigma_k:\mR^d\to\mR^d, k=1,\cdots,d$ and $g:\mR^d\times\mR^d_0\to\mR^d$
are smooth Lipschitz functions. It is well known that
SDE \eqref{SDE} admits a unique strong solution $X_t(x)$ for each initial value $x\in\mR^d$ (for example, see \cite{Pr}).
Suppose $g(x,-z)=-g(x,z)$. By It\^o's formula, one sees that the generator of SDE \eqref{SDE} is given by
\begin{align}\label{LL}
\sA \varphi(x):=\frac{1}{2}A^2_k\varphi(x)+A_0\varphi(x)+{\rm p.v.}\int_{\mR^d_0}\Big(\varphi(x+g(x,z))-\varphi(x)\Big)\nu(\dif z),
\end{align}
where p.v. stands for the Cauchy principal value, and $A_0,A_k$ are defined by \eqref{VEC}.
More precisely, for $\varphi\in C^\infty_b(\mR^d)$, if we define
\begin{align}\label{KP8}
\cT_t\varphi(x):=\mE\varphi(X_t(x)),
\end{align}
then
\begin{align}\label{KP0}
\p_t\cT_t\varphi=\sA\cT_t\varphi=\cT_t\sA \varphi.
\end{align}
The aim of this work is to show that under full H\"ormander's conditions, the solution $X_t(x)$ of SDE \eqref{SDE} admits a smooth density.

\medskip

The smoothness of  the distribution density of the solutions to SDEs with jumps has been studied extensively since Malliavin's initiated work.
In \cite{Bi0}, Bismut put forward a simple argument: Girsanov's transformation to study the smoothness of the distribution densities to SDEs with jumps.
In \cite{Bi-Gr-Ja}, Bichteler, Gravereaux and Jacod give a systematic introducion for the Malliavin calculus with jumps.
In \cite{Pi}, Picard used the difference operators to present another criterion for the smoothness of the distribution densities of Poisson functionals,
see \cite{Is-Ku} for recent development for Wiener-Poisson functionals. Under partial H\"ormander's conditions, there
are also several works to study the smoothnees of degenerate SDEs with jumps.
In \cite{Ca}, Cass established a H\"ormander's type theorem for SDEs with jumps
by proving a Norris' type lemma for discontinuous semimartingales. However, the Brownian noise can not disappear.
In the pure jump degenerate case, Komatsu and Takeuchi \cite[Theorem 3]{KT2001} introduced a quite useful estimate for discontinuous semimartingales, and then proved
a H\"ormander's type theorem for SDEs with jumps. Some subsequent results based on Komatsu-Takeuchi's estimate are referred to \cite{Ta,Kuni}.
Unfortunately, there is a gap in the proof of \cite[Theorem 3]{KT2001}. 
We fill it up in \cite{Zhang16} in a slightly different form (see Lemma \ref{Le42} below).
Basing on this new form of Komatsu-Takeuchi's type estimate, we prove a H\"ormander's type theorem for pure jump SDEs with {\it nonzero drifts} in \cite{Zhang17}. Other works about the regularization of jump noises can be found in \cite{Ba-Cl, No-Si, Is-Ku-Ts} and references therein.

\medskip

In Malliavin's probabilistic proof of H\"ormander's hypoelliptic theorem, one of the key steps is to show the 
$L^p$-integrability of the inverse of the Malliavin covariance matrix. In the nondegenerate full noise case, it is relatively easy to obtain.
However, under H\"ormander's Lie bracket conditions, it is a quite challenge problem. In particular, Norris \cite{No} provides an important estimate 
for general continuous semimartingales to treat this (see \cite{Nu}). Now it is usually called Norris' lemma (see \cite[Lemma 4.11]{Ha} for an elegant proof), 
which can be considered as a quantitative version of Doob-Meyer's decomposition theorem. For general discontinuous semimartingales, 
Komatsu-Takeuchi's estimate should be regarded as a substitution of Norris' lemma.
We shall use it to prove a full H\"ormander's theorem for SDEs with jumps, see Theorem \ref{Th1} below.

\medskip

One of the motivations of studying nonlocal H\"ormander's hypoelliptic theorem comes from the study of spatial inhomeogenous Boltzmann's equations.
It is well known that the linearized spatial inhomeogenous Boltzmann's equation can be written as the following form
that involves non-local operator of fractional Laplacian type (cf. \cite{Vi} and \cite{Ch-Zh1}):
\begin{align}\label{HA3}
\p_t f+\v\cdot \nabla_x f={\rm p.v.} \int_{\mR^d}(f(\cdot+w)-f(\cdot))\frac{K_g(\cdot,w)}{|w|^{\alpha+d}}\dif w+f\,H_g,
\end{align}
where $f$ and $g$ are functions of $x,\v$ and $w$, and
$$
K_g(\v,w):=2\int_{\{h\cdot w=0\}}g(\v-h)|h-w|^{\gamma+1+\alpha}\dif h,
$$
and
$$
H_g(\v):=2\int_{\mR^d}\!\!\int_{\{h\cdot w=0\}}(g(\v-h)-g(\v-h+w))\frac{|h-w|^{\gamma+1+\alpha}}{|w|^{\alpha+d}}\dif h\dif w.
$$
Here $\gamma+\alpha\in(-1,1)$. Note that $K_g$ is a symmetric kernel in $w$, i.e.,
$K_g(\cdot,w)=K_g(\cdot,-w)$, and $f H_g$ is a zero order term in $f$. We shall see in Section 7 that the principal part of \eqref{HA3} 
can be written as the form of \eqref{LL}.

\medskip

\noindent{\bf 1.2 Main results.} 
To make our statement of main results as  simple and apparent as possible, throughout this paper we assume that for some $\alpha\in(0,2)$,
$$
\nu(\dif z)=\dif z/|z|^{d+\alpha}.
$$
We also introduce the following assumptions about $b,\sigma_k$ and $g$: for some $\ell\in\mN\cup\{\infty\}$,
\begin{enumerate}[{\bf (H$_{\ell}$)}]
\item For any $i\in\mN$ and $j=0,\cdots,\ell$, there are $C_i, C_{ij}\geq 1$ such that for all $x\in\mR^d$ and $|z|<1$,
$$     
|\nabla^{i} b(x)|+|\nabla^{i}\sigma_k(x)|\leq C_i,\ \ |\nabla_x^{i}\nabla_z^jg(x,z)|\leq C_{ij}|z|^{1-j}.
$$
Moreover, we require $g(x,-z)=-g(x,z)$ and for some $\beta\in(0,1]$,
$$
|\nabla_zg(x,z)-\nabla_zg(x,0)|\leq C_x |z|^\beta,\ \ |z|<1,
$$
where $C_x>0$ continuously depends on $x\in\mR^d$.
\end{enumerate}
\begin{enumerate}[{\bf (H$_g^{\rm o}$)}]
   \item It holds that $\inf_{x,z\in \mR^d}\det (\mI+\nabla_xg(x,z))>0$ and supp$\{g(x,\cdot)\}\subset B_1$.
\end{enumerate}
\br
It should be kept in mind that $g(x,z)=\widetilde\sigma(x)z$ with $\widetilde\sigma:\mR^d\to\mR^d\otimes\mR^d$ 
satisfying $\|\nabla^i\widetilde\sigma\|_\infty\leq C_i$ for $i\in\mN$, fullfills the assumptions about $g$ in {\bf (H$_\ell$)}.
Moreover, in order to make {\bf (H$_g^{\rm o}$)} hold, one needs to assume $g(x,z)=\widetilde\sigma(x)z\cdot 1_{|z|\leq\delta}$
with $\delta$ being small enough so that SDE \eqref{SDE} defines a stochastic diffeomorphism flows in $\mR^d$.
\er

Let $A_0,A_k$ be as in \eqref{VEC} and $\widetilde A_k(x):=\p_{z_k}g^i(x,0)\p_i$. Define
$$
\sV_0:=\{A_k,\widetilde A_k, k=1,\cdots,d\},
$$ 
and for $j=1,2,\cdots,$
 \begin{align}
   \sV_j:=\Big\{[A_k,V], [\widetilde A_k,V],[A_0,V]:   V\in\sV_{j-1},k=1,\cdots,d\Big\}.\label{VJ}
 \end{align}
The following strong H\"ormander's condition is imposed:
\begin{enumerate}[{\bf (H$^{\rm str}_{\bf or}$)}]
   \item For some $j_0\in\{0\}\cup\mN$, span$\{\cup_{j=0}^{j_0}\sV_j\}=\mR^d$ at each point $x\in\mR^d$.
    \end{enumerate}

We aim to prove the following result.
\bt\label{Th1}
Under {\bf (H$_{\infty}$)$+$(H$_g^{\rm o}$)$+$(H$^{\rm str}_{\bf or}$)},
there is a nonnegative smooth function $\rho_t(x,y)$ on $(0,\infty)\times\mR^d\times\mR^d$ so that
$$
\mP\circ X^{-1}_t(x)(\dif y)=\rho_t(x,y)\dif y,
$$ 
where $X_t(x)$ is the solution of SDE \eqref{SDE} with starting point $X_0(x)=x$,
and
\begin{align}
\p_t\rho_t(x,y)=\sA\rho_t(\cdot,y)(x)=\sA^*\rho_t(x,\cdot)(y),\ \ \lim_{t\downarrow 0}\rho_t(x,y)=\delta_x(\dif y),
\end{align}
where $\sA^*$ is the adjoint operator of $\sA$ (see \eqref{LL}), and $\delta_x$ is the Dirac measure concentrated at $x$.
\et

To treat the large jumps, we make the following stronger assumptions:
\begin{enumerate}[{\bf (H$'_\ell$)}]
   \item In addition to {\bf (H$_\ell$)}, we assume that $\cup_{j=0}^\infty\sV_j\subset C_b^\infty(\mR^d)$ and
   $$
   |\nabla_z^jg(x,z)|\leq C_{j}|z|^{1-j},\ |z|<1, \ j\in\mN_0.
   $$
    \end{enumerate}
\begin{enumerate}[{\bf (H$^{\rm uni}_{\bf or}$)}]
   \item The following uniform H\"ormander's condition holds: for some $j_0\in \mN_0$ and $c_0>0$,
\begin{align}\label{Hor}
   \inf_{x\in\mR^d}\inf_{|u|=1}\sum_{j=0}^{j_0}\sum_{V \in \sV_j}|uV(x)|^2\geq c_0.
\end{align}
    \end{enumerate}
\br
In {\bf (H$'_\ell$)}, the drift $b$ may be linear growth, but $\sigma$ and $g$ are bounded in $x$. If $b$ is also bounded, 
then for each $V\in\cup_{j=0}^\infty\sV_j$,
it automatically holds that $V\in C^\infty_b(\mR^d)$.
\er

By a perturbation argument, we can prove the following result. Since its proof is completely the same as in \cite[Theorem 1.2]{Zhang16}, we omit the details.
\bt\label{Main}
Let $\sL$ be a bounded linear operator in Sobolev space $\mW^{k,p}(\mR^d)$ for any $p>1$ and $k\in\mN_0$. 
Under {\bf (H$'_2$)}$+${\bf (H$^{\rm o}_g$)}$+${\bf (H$^{\rm uni}_{\bf or}$)},  
there exists a continuous function $\rho_t(x,y)$ 
on $(0,\infty)\times\mR^d\times\mR^d$ called fundamental solution of operator $\sA+\sL$ with the properties that
\begin{enumerate}[(i)]
\item For each $t>0$ and $y\in\mR^d$, the mapping $x\mapsto\rho_t(x,y)$ is smooth, and there is a $\gamma=\gamma(\alpha,j_0,d)>0$ such that 
for any $p\in(1,\infty)$, $T>0$ and $k\in\mN_0$,
\begin{align}
\|\nabla^k_x\rho_t(x,\cdot)\|_p\leq C t^{-(k+d)\gamma}, \ \ \forall (t,x)\in(0,T]\times\mR^d.\label{NG2}
\end{align}
\item For any $p\in(1,\infty)$ and $\varphi\in L^p(\mR^d)$, 
$\cT_t\varphi(x):=\int_{\mR^d}\varphi(y)\rho_t(x,y)\dif y\in \cap_{k}\mW^{k,p}(\mR^d)$ satisfies
\begin{align}
\p_t \cT_t\varphi(x)=(\sA+\sL)\cT_t\varphi(x), \ \ \forall (t,x)\in(0,\infty)\times\mR^d.\label{NG1}
\end{align}
\end{enumerate} 
\et

The above result provides a way of treating the large jumps. In applications, we usually take $\sL$ as the large jump operator, for example,
$$
\sL\varphi(x):= \int_{|z|\geq \delta}\Big(\varphi(x+z)-\varphi(x)\Big)\kappa(x,z)\nu(\dif z),\ \delta>0.
$$
In fact, we shall apply Theorem \ref{Main} to the nonlocal kinetic operators in Section 7. 
However, sometimes it is not easy to verify the boundedness of the large jump operator in $\mW^{k,p}$.
The following theorem provides part results for general SDE \eqref{SDE} without assuming {\bf (H$^{\rm o}_g$)},
which is still based on the perturbation argument and suitable interpolation techniques as in \cite{Zhang16}. 
\bt\label{Th2}
Under {\bf (H$'_2$)}$+${\bf (H$^{\rm uni}_{\bf or}$)} and $g\in C^\infty_b(\mR^d\times B_1^c)$, where $B_1$ is the unit ball,
there is a nonnegative measurable function $\rho_t(x,y)$ 
on $(0,\infty)\times\mR^d\times\mR^d$ so that 
\begin{enumerate}[(i)]
\item For each $t>0$ and $x\in\mR^d$, $\mP\circ X^{-1}_t(x)(\dif y)=\rho_t(x,y)\dif y$, where $X_t(x)$ is the solution of SDE \eqref{SDE} with starting point $X_0(x)=x$.
\item There are $\eps_0, \vartheta_0,q_0$ and $\gamma$ such that for all $\eps\in[0,\eps_0)$, $\vartheta\in[0,\vartheta_0)$ and $q\in[1,q_0)$,
$$
\sup_{x\in\mR^d}\|(\mI-\Delta)^{\frac{\alpha+\eps}{2}}_x\Delta^{\frac{\vartheta}{2}}_y\rho_t(x,\cdot)\|_q
\leq Ct^{-\gamma},\ t\in(0,1).
$$
\item If the support of $g(x,\cdot)$ is contained in a ball $B_R$ for all $x\in\mR^d$, where $R\geq 1$, then for any $k\in\mN_0$,
there are $\vartheta_0,q_0$ and $\gamma_k$ such that for all $\vartheta\in[0,\vartheta_0)$ and $q\in[1,q_0)$,
$$
\sup_{x\in\mR^d}\|\nabla^k_x\Delta^{\frac{\vartheta}{2}}_y\rho_t(x,\cdot)\|_q
\leq Ct^{-\gamma_k},\ t\in(0,1).
$$
\end{enumerate}
\et
\br
The above (ii) implies that for any
$\varphi\in L^\infty(\mR^d)$, 
$$
\cT_t\varphi(x):=\int_{\mR^d}\varphi(y)\rho_t(x,y)\dif y\in \mC^{\alpha+\eps},
$$ 
where $\mC^{\alpha+\eps}$ is the usual H\"older space.
In particular, the strong Feller property holds for $\cT_t$.
Moreover, if $\sigma_k\equiv 0$ and $\alpha\in[1,2)$, then $\cT_t\varphi$ satisfies the following nonlocal equation in the classical sense
$$
\p_t\cT_t\varphi=\sA\cT_t\varphi,\ t>0.
$$ 
\er

\noindent{\bf 1.3 Examples.} Below we provide several simple examples to illustrate our results.

\medskip

\noindent {\bf Example 1.} (A standard nonlinear example) Let $L_t$ be an one dimensional L\'evy process with L\'evy measure $\nu(\dif z)=\dif z/|z|^{1+\alpha}$, where $\alpha\in(0,2)$.
Consider the following SDE:
$$
\dif X_t=-\sin(X_t)\dif t+\cos(X_t)\dif L_t,\ \ X_0=x.
$$
In this case, $A_0=-\sin(x)\p_x$ and $\widetilde A_1=\cos(x)\p_x$.
The generator of $X_t$ is given by
$$
\sA \varphi(x):=-\sin(x)\varphi'(x)+{\rm p.v.}\int_{\mR}\Big(\varphi(x+\cos(x)z)-\varphi(x)\Big)\nu(\dif z).
$$
Clearly, $[A_0,\widetilde A_1]=\p_x$ and {\bf (H$^{\rm uni}_{\bf or}$)} holds with $c_0=1$ in \eqref{Hor}.

\medskip
\noindent {\bf Example 2.} (Nonlocal Grushin's type operator) Let $L_t=(L^1_t,L^2_t)$ be a two-dimensional 
L\'evy process with L\'evy measure $\nu(\dif z)=1_{|z|\leq 1}|z|^{-2-\alpha}\dif z$, where
$\alpha\in(0,2)$. Let $X_t=(X^1_t,X^2_t)$ solve the following SDE:
$$
\left\{
\begin{aligned}
&\dif X^1_t=\dif L^1_t,& X^1_0=x_1,\\
&\dif X^2_t=X^1_t\dif L^2_t, & X^2_0=x_2.
\end{aligned}
\right.
$$
In this case, $A_k=0$ for $k=0,1,2$, $\widetilde A_1=\p_{x_1}$, $\widetilde A_2=x_1\p_{x_2}$, and the generator of $X_t$ is given by
$$
\sA \varphi(x):={\rm p.v.}\int_{\mR^2}\Big(\varphi(x_1+z_1,x_2+x_1z_2)-\varphi(x)\Big)\nu(\dif z).
$$
Clearly, $[\widetilde A_1,\widetilde A_2]=\p_{x_2}$
and {\bf (H$^{\rm uni}_{\bf or}$)} holds with $c_0=2$ in \eqref{Hor}.

\medskip

\noindent {\bf Example 3.} (Local and nonlocal Grushin's type operator) 
Let $L_t$ be an one-dimensional L\'evy process with L\'evy measure $\nu(\dif z)=|z|^{-1-\alpha}\dif z$, where
$\alpha\in(0,2)$ and $W_t$ an one-dimensional Brownian motion. Let $X_t=(X^1_t,X^2_t)$ solve the following SDE:
$$
\left\{
\begin{aligned}
&\dif X^1_t=\dif L_t,& X^1_0=x_1,\\
&\dif X^2_t=X^1_t\dif W_t,& X^2_0=x_2.
\end{aligned}
\right.
$$
In this case, $A_0=A_1=0, A_2=x_1\p_{x_2}$, $\widetilde A_1=\p_{x_1}$, $\widetilde A_2=0$, and the generator of $X_t$ is given by
$$
\sA \varphi(x):=\frac{1}{2}x^2_1\p^2_{x_2}\varphi(x)+{\rm p.v.}\int_{\mR}\Big(\varphi(x_1+z_1,x_2)-\varphi(x)\Big)\nu(\dif z_1).
$$
Clearly, $[\widetilde A_1,A_2]=\p_{x_2}$ and {\bf (H$^{\rm uni}_{\bf or}$)} holds with $c_0=2$ in \eqref{Hor}.

\medskip

\noindent {\bf Example 4.} (Nonlocal Kolmogorov's type operator)  Let $L_t$ be an one-dimensional L\'evy process with L\'evy measure $\nu(\dif z)=|z|^{-1-\alpha}\dif z$, where
$\alpha\in(0,2)$. Let $X_t=(X^1_t,X^2_t)$ solve the following SDE:
$$
\left\{
\begin{aligned}
&\dif X^1_t=X^2_t\dif t,& X^1_0=x_1,\\
&\dif X^2_t=\dif L_t-X^1_t\dif t,& X^2_0=x_2.
\end{aligned}
\right.
$$
In this case, $A_1=A_2=0, A_0=x_2\p_{x_1}-x_1\p_{x_2}$, $\widetilde A_1=0$, $\widetilde A_2=\p_{x_2}$, and the generator of $X_t$ is given by
$$
\sA \varphi(x):=x_2\p_{x_1}\varphi(x)-x_1\p_{x_2}\varphi(x)+{\rm p.v.}\int_{\mR}\Big(\varphi(x_1,x_2+z_2)-\varphi(x)\Big)\nu(\dif z_2).
$$
Clearly, $[A_0,\widetilde A_2]=\p_{x_1}$ and {\bf (H$^{\rm uni}_{\bf or}$)} holds with $c_0=2$ in \eqref{Hor}.

\medskip

\noindent {\bf Example 5.} (Nonlocal relativistic operator)  Let $L_t$ be an one-dimensional L\'evy process with 
L\'evy measure $\nu(\dif z)=|z|^{-1-\alpha}\dif z$, where
$\alpha\in(0,2)$. Let $Z_t=(X_t,V_t)$ solve the following SDE:
$$
\left\{
\begin{aligned}
&\dif X_t=V_t/{\sqrt{1+|V_t|^2}}\dif t,& X_0=x,\\
&\dif V_t=\dif L_t-X_t\dif t,& V_0={\rm v}.
\end{aligned}
\right.
$$
In this case, $A_1=A_2=0, A_0={\rm v}/\sqrt{1+|{\rm v}|^2}\p_{x}-x\p_{{\rm v}}$, $\widetilde A_1=0$, $\widetilde A_2=\p_{{\rm v}}$, and the generator of $Z_t$ is given by
$$
\sA \varphi(x,{\rm v}):=\frac{{\rm v}}{\sqrt{1+|{\rm v}|^2}}\p_{x}\varphi(x,{\rm v})-x\p_{\rm v}\varphi(x,{\rm v})+{\rm p.v.}\int_{\mR}\Big(\varphi(x,{\rm v+v'})-\varphi(x,{\rm v})\Big)\nu(\dif w).
$$
Clearly, $[A_0,\widetilde A_2]=(1+|{\rm v}|^2)^{-3/2}\p_{x}$ and {\bf (H$^{\rm str}_{\bf or}$)} holds.

\medskip

\noindent{\bf 1.4 Structure.} This paper is organized as follows: In Section 2 we recall Bismut's approach to the Malliavin calculus of Wiener-Poisson functionals.
In Section 3, we recall and prove an improved Komatsu-Takeuchi's type estimate. In Section 4, we show the key estimate of 
the Laplace transform of the reduced Malliavin matrix. In Section 5, we prove Theorem \ref{Th1}. In Section 6 we prove Theorem \ref{Th2}.
Finally, in Section 7, we apply our main result to the nonlocal kinetic operators and show the existence of smooth fundamental solutions, where 
the key point is to write the nonlocal operator as the generator of an SDE. For this aim, we need to solve a relaxed Jacobi equation.
\section{Preliminaries}

In this subsection, we recall some basic facts about Bismut's approach to the Malliavin calculus with jumps (see \cite[Section 2]{So-Zh}).
Let $\Gamma\subset\mR^d$ be an open set containing the origin. We define
\begin{align}
\Gamma_0:=\Gamma\setminus\{0\},\ \ \varrho(z):=1\vee\dis(z,\Gamma^c_0)^{-1},\label{Rho1}
\end{align}
where $\dis(z,\Gamma^c_0)$ is the distance of $z$ to the complement of $\Gamma_0$. Notice that $\varrho(z)=\frac{1}{|z|}$ near $0$.

Let $\Omega$ be the canonical space of all points $\omega=(w,\mu)$, where 
\begin{itemize}
\item $w: [0,1]\to\mR^d$ is a continuous function with $w(0)=0$;
\item $\mu$ is an integer-valued measure on $[0,1]\times\Gamma_0$ with $\mu(A)<+\infty$ for any compact set $A\subset[0,1]\times\Gamma_0$.
\end{itemize} 
Define the canonical process on $\Omega$ as follows: for $\omega=(w,\mu)$,
$$
W_t(\omega):=w(t),\ \ \ N(\omega; \dif t,\dif z):=\mu(\omega; \dif t,\dif z):=\mu(\dif t,\dif z).
$$
Let $(\sF_t)_{t\in[0,1]}$ be the smallest right-continuous filtration on $\Omega$ such that $W$ and $N$ are optional. 
In the following, we write $\sF:=\sF_1$, and endow $(\Omega,\sF)$ with the unique probability measure $\mP$ such that 
\begin{itemize}
\item $W$ is a standard $d$-dimensional Brownian motion;
\item $N$ is a Poisson random measure with intensity $\dif t\nu(\dif z)$, where $\nu(\dif z)=\kappa(z)\dif z$ with
\begin{align}
\kappa\in C^1(\Gamma_0;(0,\infty)),\  \int_{\Gamma_0}(1\wedge|z|^2)\kappa(z)\dif z<+\infty,\ \ |\nabla\log\kappa(z)|\leq C\varrho(z),\label{ET1}
\end{align}
where $\varrho(z)$ is defined by (\ref{Rho1}). In the following we write
$$
\widetilde N(\dif t,\dif z):=N(\dif t,\dif z)-\dif t\nu(\dif z).
$$
\end{itemize}
Let $p\geq 1$ and $m\in\mN$. We introduce the following spaces for later use.
\begin{itemize}
\item $\mL^1_p$: The space of all predictable processes: $\xi:\Omega\times[0,1]\times\Gamma_0\to\mR^m$ with finite norm:
$$
\|\xi\|_{\mL^1_p}:=\left[\mE\left(\int^1_0\!\!\!\int_{\Gamma_0}|\xi(s,z)|\nu(\dif z)\dif s\right)^p\right]^{\frac{1}{p}}
+\left[\mE\int^1_0\!\!\!\int_{\Gamma_0}|\xi(s,z)|^p\nu(\dif z)\dif s\right]^{\frac{1}{p}}<\infty.
$$
\item $\mL^2_p$: The space of all predictable processes: $\xi:\Omega\times[0,1]\times\Gamma_0\to\mR^m$ with finite norm:
$$
\|\xi\|_{\mL^2_p}:=\left[\mE\left(\int^1_0\!\!\!\int_{\Gamma_0}|\xi(s,z)|^2\nu(\dif z)\dif s\right)^{\frac{p}{2}}\right]^{\frac{1}{p}}
+\left[\mE\int^1_0\!\!\!\int_{\Gamma_0}|\xi(s,z)|^p\nu(\dif z)\dif s\right]^{\frac{1}{p}}<\infty.
$$
\item $\mH_p$: The space of all measurable adapted processes $h:\Omega\times[0,1]\to\mR^d$ with finite norm:
$$
\|h\|_{\mH_p}:=\left[\mE\left(\int^1_0|h(s)|^2\dif s\right)^{\frac{p}{2}}\right]^{\frac{1}{p}}<+\infty.
$$
\item $\mV_p$:  The space of all predictable processes $\v: \Omega\times[0,1]\times\Gamma_0\to\mR^d$ with finite norm:
$$
\|\v\|_{\mV_p}:=\|\nabla\v\|_{\mL^1_p}+\|\v\varrho\|_{\mL^1_p}<\infty,
$$
where $\varrho(z)$ is defined by (\ref{Rho}). Below we shall write
$$
\mH_{\infty-}:=\cap_{p\geq 1}\mH_p,\ \ \mV_{\infty-}:=\cap_{p\geq 1}\mV_p.
$$
\item $\mH_0$:  The space of all bounded measurable adapted  processes $h:\Omega\times[0,1]\to\mR^d$.
\item $\mV_0$:  The space of all predictable processes $\v: \Omega\times[0,1]\times\Gamma_0\to\mR^d$ 
with the following properties: (i) $\v$ and $\nabla_z \v$ are bounded;
(ii) there exists a compact subset $U\subset \Gamma_0$ such that
$$
\v(t,z)=0,\ \ \forall z\notin U.
$$

\item For any $p\geq 1$, $\mV_0$ (resp. $\mH_0$) is dense in $\mV_p$ (resp.  $\mH_p$).
\end{itemize}

Let $C_p^\infty(\mR^m)$ be the class of all smooth functions on $\mR^m$ whose derivatives of all orders have at most polynomial growth. 
Let $\cF C^\infty_p$ be the class of all Wiener-Poisson functionals on $\Omega$ with the following form:
$$
F(\omega)=f(w(h_1),\cdots, w(h_{m_1}), \mu(g_1),\cdots, \mu(g_{m_2})),\ \ \omega=(w,\mu)\in\Omega,
$$
where $f\in C_p^\infty(\mR^{m_1+m_2})$, $h_1,\cdots, h_{m_1}\in\mH_0$ and $g_1,\cdots, g_{m_2}\in\mV_0$ are non-random, and
$$
w(h_i):=\int^1_0\<h_i(s), \dif w(s)\>_{\mR^d},\ \ \mu(g_j):=\int^1_0\!\!\!\int_{\Gamma_0}g_j(s,z)\mu(\dif s,\dif z).
$$
Notice that
$$
\cF C^\infty_p\mbox{ is dense in } \cap_{p\geq 1}L^p(\Omega,\sF,\mP).
$$
For $F\in \cF C^\infty_p$ and $\Theta=(h,\v)\in\mH_{\infty-}\times\mV_{\infty-}$, define
\begin{align}
D_\Theta F&:=\sum_{i=1}^{m_1}\p_i f\!\!\int^1_0\<h(s), h_i(s)\>_{\mR^d}\dif s
+\sum_{j=1}^{m_2}\p_{j+m_1} f\!\!\int^1_0\!\!\!\int_{\Gamma_0}\nabla_\v g_j(s,z)\mu(\dif s,\dif z),\label{PT1}
\end{align}
where $\nabla_\v g_j(s,z):=\v_i(s,z)\p_{z_i} g_j(s,z)$.

We have the following integration by parts formula (cf. \cite[Theorem 2.9]{So-Zh}).
\bt\label{Th21}
Let $\Theta=(h,\v)\in\mH_{\infty-}\times\mV_{\infty-}$ and $p>1$.
The linear operator $(D_\Theta, \cF C^\infty_p)$ is closable in $L^p(\Omega)$. The closure is denoted by
$(D_\Theta,\mW^{1,p}_\Theta(\Omega))$, which is a Banach space with respect to the norm:
$$
\|F\|_{\Theta; 1,p}:=\|F\|_{L^p}+\|D_\Theta F\|_{L^p}.
$$
Moreover, we have the following consequences:
\begin{enumerate}[(i)]
\item For any $F\in\mW^{1,p}_\Theta(\Omega)$, the following integration by parts formula holds:
\begin{align}
\mE(D_\Theta F)=\mE(F \div(\Theta)),\label{ER88}
\end{align}
where $\div(\Theta)$ is defined by
\begin{align}
\div\Theta:=\int^1_0\<h(s),\dif W_s\>_{\mR^d}-\int^1_0\!\!\!\int_{\Gamma_0}\frac{\div(\kappa\v)(s,z)}{\kappa(z)}\widetilde N(\dif s,\dif z).\label{ER1}
\end{align}
\item For $m,k\in\mN$ and $F=(F_1,\cdots, F_m)\in (\mW^{1,\infty-}_\Theta)^m$, $\varphi\in C^\infty_p(\mR^m;\mR^k)$,
we have 
\begin{align}\label{Chain}
\varphi(F)\in (\mW^{1,\infty-}_\Theta)^k\  \mbox{ and }\ D_\Theta\varphi (F)=D_\Theta F^i\p_i\varphi(F).
\end{align}

\end{enumerate}
\et

The following Kusuoka and Stroock's formula is proven in \cite[Proposition 2.11]{So-Zh}.
\bp\label{Pr1}
Fix $\Theta=(h,\v)\in\mH_{\infty-}\times\mV_{\infty-}$.  Let $\eta(\omega,s,z):\Omega\times[0,1]\times\Gamma_0\to\mR$ and
$f(\omega,s):\Omega\times[0,1]\to\mR^d$ be measurable maps
and satisfy that for each $(s,z)\in[0,1]\times\Gamma_0$, 
$$
\eta(s,z), f(s)\in\mW^{1,\infty-}_\Theta,\ \ \eta(s,\cdot)\in C^1(\Gamma_0),
$$ 
and $s\mapsto f(s), D_\Theta f(s)$ are $\sF_s$-adapted, 
\begin{align}
\mbox{$s\mapsto  \eta(s,z), D_\Theta\eta(s,z),\nabla_z\eta(s,z)$ are left-continuous and $\sF_s$-adapted}.
\end{align}
Assume that  for any $p>1$, $\int^1_0\|f(s)\|_{\Theta;1,p}^p\dif s<\infty$ and
\begin{align}
\mE\left[\sup_{s\in[0,1]}\sup_{z\in\Gamma_0}\left(\frac{|\eta(s,z)|^p+|D_\Theta\eta(s,z)|^p}{(1\wedge|z|)^p}
+|\nabla_z\eta(s,z)|^p\right)\right]<+\infty.\label{ET7}
\end{align}
Then $\sI_1(f):=\int^1_0f(s)\dif W_s$, $\sI_2(\eta):=\int^1_0\!\int_{\Gamma_0}\eta(s,z)\widetilde N(\dif s,\dif z)\in\mW^{1,\infty-}_\Theta$ and
\begin{align}\label{For2}
\begin{split}
D_\Theta \sI_1(f)&=\int^1_0 D_\Theta f(s)\dif W_s+\int^1_0 f(s)\dot h(s)\dif s,\\
D_\Theta \sI_2(\eta)&=\int^1_0\!\!\!\int_{\Gamma_0}D_\Theta\eta(s,z)\widetilde N(\dif s,\dif z)+\int^1_0\!\!\!\int_{\Gamma_0}\nabla_\v\eta(s,z)N(\dif s,\dif z),
\end{split}
\end{align}
where $\nabla_\v\eta(s,z):=\v_i(s,z)\p_{z_i}\eta(s,z)$.
\ep

We also need the following Burkholder's type inequalities (cf. \cite[Lemma 2.3]{So-Zh}).
\bl\label{Le2}
\begin{enumerate}[(i)]
\item For any $p>1$, there is a constant $C_p>0$ such that for any $\xi\in \mL^1_p$,
\begin{align}
\mE\left(\sup_{t\in[0,1]}\left|\int^t_0\!\!\!\int_{\Gamma_0}\xi(s,z)N(\dif s,\dif z)\right|^p\right)\leq C_p\|\xi\|^p_{\mL^1_p}.\label{BT1}
\end{align}
\item For any $p\geq 2$, there is a constant $C_p>0$ such that for any $\xi\in \mL^2_p$,
\begin{align}
\mE\left(\sup_{t\in[0,1]}\left|\int^t_0\!\!\!\int_{\Gamma_0}\xi(s,z)\widetilde N(\dif s,\dif z)\right|^p\right)\leq C_p\|\xi\|^p_{\mL^2_p}.\label{BT2}
\end{align}
\end{enumerate}
\el
The following result is taken from \cite[Lemma 2.5]{Zhang17}, which is stated in a slightly different form.
\bl\label{Le25}
Let $g_s(z),\eta_s$ be two left continuous $\sF_s$-adapted processes satisfying that for some $\beta\in(0,1]$,
\begin{align}
0\leq g_s(z)\leq\eta_s,\ |g_s(z)-g_s(0)|\leq\eta_s |z|^\beta,\ \forall |z|\leq 1,\label{UY22}
\end{align}
and for any $p\geq 2$,
$$
\mE\left(\sup_{s\in[0,1]}|\eta_s|^p\right)<+\infty.
$$
Let $f_s$ be a nonnegative measurable adapted process and $\nu(\dif z)=\dif z/|z|^{d+\alpha}$.
For any $\delta\in(0,1)$ and $m\geq 2$, there exist $c_2,\theta\in(0,1), C_2\geq 1$ such that for all $\lambda,p\geq 1$ and $t\in(0,1)$,
\begin{align}\label{TR7}
\begin{split}
&\mE\exp\left\{-\lambda\int^t_0\!\!\!\int_{\mR^d_0}g_s(z)\zeta_{m,\delta}(z)N(\dif s,\dif z)-\lambda\int^t_0f_s\dif s\right\}\\
&\quad\leq C_2\left(\mE\exp\left\{-c_2\lambda^\theta\int^t_0(f_s+g_s(0))\dif s\right\}\right)^{\frac{1}{2}}+C_p\lambda^{-p},
\end{split}
\end{align}
where $\zeta_{m,\delta}(z)$ is a nonnegative smooth function with
$$
\zeta_{m,\delta}(z)=|z|^{m},\ \ |z|\leq\delta/4,\ \ \zeta_{m,\delta}(z)=0,\ \ |z|>\delta/2.
$$
\el

\section{Improved Komatsu-Takeuchi's type estimate}

Let  $\sS_m$ be the class of all $m-$dimensional semi-martingales with the following form
\begin{eqnarray*}
 X_t=X_0+\int_0^t f^0_s\dif s+\int_0^t  f^k_s\dif W^k_s+\int_0^t \int_{\mR_0^d}g_s(z)\widetilde{N}(\dif s,\dif z),
\end{eqnarray*}
where $f^k_s,k=0,\cdots,d$ and $g_s(z)$ are $m$-dimensional predictable processes with
\begin{eqnarray*}
   \|X_\cdot(\omega)\|_{\sS_m}:=\sup_{s\in [0,1]}
   \left(|X_s(\omega)|^2\vee |f^0_s(\omega)|^2\vee |f^k_s(\omega)|^2 \vee \sup_{z\in \mR^d}\frac{|g_s(z,\omega)|^2}{1\wedge |z|^2}\right)<\infty \mbox{ for }a.a.-\omega.
\end{eqnarray*}
Here and below we use the following convention: If an index appears twice in a product, then it will be summed automatically. For instances,
$$
\int_0^t  f^k_s\dif W^k_s:=\sum_{k=1}^d\int_0^t  f^k_s\dif W^k_s,\ \ |f_s^k(\omega)|^2:=\sum_{k=1}^d|f_s^k(\omega)|^2.
$$
For $\kappa>0$, let $\sS_m^\kappa$ be the subclass of $\sS_m$ with
\begin{eqnarray*}
  \|X_\cdot(\omega)\|_{\sS_m}\leq  \kappa  \mbox{ for } a.a.-\omega.
\end{eqnarray*}

We first recall the following estimates from \cite[Theorem 4.2]{Zhang16}.
\bl\label{Le42}
For $\kappa>0$, let $(X_t)_{t\geq 0}$ and $(f^0_t)_{t\geq 0}$ be two semimartingales  in $\sS_m^\kappa$ with the form
\begin{align*}
X_t&=X_0+\int^{t\wedge\tau}_0(f^0_s+h^\delta_s)\dif s+\int^t_0f^k_s\dif W^k_s+\int^t_0\!\!\!\int_{|z|\leq\delta}g_{s-}(z)\widetilde N(\dif s,\dif z),\\
f^0_t&=f^0_0+\int^t_0f^{00}_s\dif s+\int^t_0f^{0k}_s\dif W^k_s+\int^t_0\!\!\!\int_{|z|\leq\delta}g^0_{s-}(z)\widetilde N(\dif s,\dif z),
\end{align*}
where $\delta\in(0,1]$ and $\tau$ is a stopping time.
Assume that for some $\beta\geq 0$,
$$
|h^\delta_t|^2\leq\kappa\delta^\beta,\ \ a.s.
$$
For any $\eps,\delta, t\in(0,1]$, there are positive random variables $\zeta_1,\zeta_2$ with $\mE \zeta_i\leq 1, i=1,2$ such that  
almost surely
\begin{eqnarray}
 \label{AA-11}
c_0\int_0^t\left(|f_s^k|^2+\int_{|z|\leq \delta }|g_s(z)|^2 \nu(\dif z)\right)\dif s
\leq (\delta^{-1}+\eps^{-1})\int_0^t|X_s|^2\dif s +\kappa \delta\log \zeta_1+\kappa(\eps+t\delta),
 \end{eqnarray}
 and
\begin{align}
c_1\int^{t\wedge\tau}_0|f^0_s|^2\dif s\leq(\delta^{-\frac{3}{2}}+\eps^{-\frac{3}{2}})\int^t_0|X_s|^2\dif s
+\kappa\delta^{\frac{1}{2}}\log \zeta_2+\kappa(\eps\delta^{-\frac{1}{2}}+\eps^{\frac{1}{2}}+t\delta^{\frac{1}{2}\wedge\beta}),\label{Eso2}
\end{align}
where $c_0,c_1\in(0,1)$ only depend on $\int_{|z|\leq 1}|z|^2\nu(\dif z)$.
\el
Below we show a refinement  of \eqref{AA-11}  and \cite[Lemma 5.1]{KT2001} for our aim.
\begin{lemma}
\label{8-1}
For $\kappa>0$ and $\delta\in(0,1)$, let  $(X_t)_{t\geq 0}$ be a semi-martingale  in $\sS_m^\kappa$ with the form
$$
X_t=X_0+\int_0^{t}f^0_s\dif s+\int_0^t f_s^k \dif W_s^k+\int_0^t\!\! \int_{|z|\leq \delta }g_s(z)\widetilde{N}(\dif s,\dif z).
$$
For any $\eps,\delta,t\in(0,1)$, there is a positive random variable $\zeta$ with $\mE\zeta\leq 1$ such that for all $\beta\in[0,2]$,
it holds almost surely
 \begin{eqnarray}
 \label{AA-1}
 \begin{split}
&c_0\int_0^t\left(|f_s^k|^2+\int_{|z|\leq \delta }|g_s(z)|^2 \nu(\dif z)\right)\dif s
\leq (\delta^{-\beta}+\eps^{-1})\int_0^t|X_s|^2\dif s +\kappa \delta^\beta\log \zeta+\kappa \eps,
 \end{split}
 \end{eqnarray}
where $c_0\in(0,1)$ only depends on $\int_{|z|\leq 1}|z|^2\nu(\dif z)$.
In particular, if  $\nu(\dif z)=\dif z/|z|^{d+\alpha}$ for some  $\alpha\in (0,2)$ and
\begin{eqnarray}\label{z-4}
g_s(z)=\Gamma_s\cdot z+\widetilde g_s(z)\mbox{ with }
\sup_{s\in [0,1]}\left( \|\Gamma_s\|^2_{HS} \vee  \sup_{|z|\leq 1}\frac{|\widetilde g_s(z)|^2}{|z|^4 }\right) \leq \kappa,\quad a.e.-\omega,
\end{eqnarray}
where $\Gamma_s: \mR_+\times\mR^d\to\mR^m\otimes\mR^d$ is a matrix valued predictable process,
 and $\|\cdot\|_{HS}$ denotes the Hilbert-Schdmit norm,  then for some $c_0=c_0(d,\alpha)\in(0,1)$,
 \begin{eqnarray}
 \label{AA-10}
 \begin{split}
c_0\int_0^t\Big(|f_s^k|^2+\|\Gamma_s\|^2_{HS}\Big)\dif s\leq \delta^{2\alpha-6}
\int_0^t|X_s|^2\dif s +\kappa \delta^\alpha\log \zeta+\kappa \delta^2.
  \end{split}
 \end{eqnarray}
\end{lemma}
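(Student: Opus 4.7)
The plan is to refine the proof of Lemma~\ref{Le42} (which essentially handles the case $\beta=1$) by introducing the tunable scale $a:=\kappa\delta^\beta$ and localizing the analysis to the region $\{|X|^2\lesssim a\}$. The strategy has three ingredients: (a) apply It\^o's formula to $|X_s|^2$ (or to a smooth cutoff $\Phi(|X_s|^2)$) to express
$$Q_t:=\int_0^t\Big(|f^k_s|^2+\int_{|z|\leq\delta}|g_s(z)|^2\nu(\dif z)\Big)\dif s$$
in terms of $|X_t|^2$ plus a drift and a martingale $M_t$; (b) decompose $Q_t$ into the contributions from $\{|X_s|^2\leq a\}$ and its complement; (c) apply an exponential martingale inequality with scale $\mu\sim 1/a$ to produce the $\kappa\delta^\beta\log\zeta$ term.

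Concretely, the It\^o expansion (after cancellation of the $\widetilde N$-drift against the continuous compensator drift) reads
$$Q_t=|X_t|^2-|X_0|^2-2\int_0^t X_s\cdot f^0_s\,\dif s-M_t,$$
with $M_t:=2\int X\cdot f^k\dif W^k+\int\!\!\int(2X_{s-}\cdot g+|g|^2)\widetilde N(\dif s,\dif z)$. The drift is handled by Young's inequality, $2\int|X||f^0|\,\dif s\leq\eps^{-1}\int|X|^2\,\dif s+\kappa\eps$ (using $|f^0|^2\leq\kappa$ and $t\leq 1$). For the part of $Q_t$ over $\{|X|^2>a\}$ I use $\mathbbm1_{|X|^2>a}\leq|X|^2/a$ combined with the pointwise bounds $|f^k|^2\leq\kappa$ and $\int|g|^2\nu(\dif z)\leq C\kappa$ (from $|g|^2\leq\kappa(1\wedge|z|^2)$ and integrability of $\nu$), which yields a contribution of order $(\kappa/a)\int|X|^2\,\dif s=\delta^{-\beta}\int|X|^2\,\dif s$. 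For the localized piece, the bound $|X|^2\leq a$ gives $\langle M\rangle_{\rm loc}\leq Ca\,Q_t+{\rm l.o.t.}$ via $(2Xg+|g|^2)^2\leq 8|X|^2|g|^2+2|g|^4$ together with $|g|^4\leq\kappa\delta^2|g|^2$, the remainder $\kappa\delta^2$ being absorbed in $a$ since $\beta\leq 2$. The exponential martingale inequality then yields, for $\mu=(Ca)^{-1}$, $-M_{\rm loc}\leq\mu^{-1}\log\zeta+\tfrac{\mu}{2}\langle M\rangle_{\rm loc}\leq Ca\log\zeta+\tfrac12 Q_{\rm loc}$, absorbing half of the localized $Q$-term and producing the $\kappa\delta^\beta\log\zeta$ contribution.

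For \eqref{AA-10} I would apply \eqref{AA-1} with $\beta=2$ and $\eps=\delta^{4-\alpha}$. Under hypothesis \eqref{z-4}, symmetry of $\nu(\dif z)=\dif z/|z|^{d+\alpha}$ gives $\int_{|z|\leq\delta}|\Gamma_s z|^2\nu(\dif z)=c_d\|\Gamma_s\|_{HS}^2\delta^{2-\alpha}$, while $\int_{|z|\leq\delta}|\widetilde g_s|^2\nu(\dif z)\leq C\kappa\delta^{4-\alpha}$; a Cauchy-Schwarz dichotomy (on whether $\|\Gamma_s\|_{HS}^2$ exceeds $\kappa\delta^2$) yields $\|\Gamma_s\|_{HS}^2\leq C\delta^{\alpha-2}\int|g_s|^2\nu(\dif z)+C\kappa\delta^2$. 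Multiplying \eqref{AA-1} by $\delta^{\alpha-2}$ produces the exponents $\delta^{\alpha-2}\cdot\delta^{-2}=\delta^{2\alpha-6}$ in front of $\int|X|^2\dif s$, $\kappa\delta^{\alpha-2+2}=\kappa\delta^{\alpha}$ in front of $\log\zeta$, and $\kappa\eps\delta^{\alpha-2}=\kappa\delta^2$ for the residual, matching \eqref{AA-10}.

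The main obstacle is the exponential martingale estimate for the jump part of $M$: its exponential compensator is $\int\!\!\int(e^{\mu h}-1-\mu h)\nu(\dif z)\dif s$ rather than $(\mu^2/2)\langle M\rangle_t$. The cutoff at level $a$ is precisely what keeps the jump increments small, $|h|=|2X_{s-}\cdot g+|g|^2|\lesssim\sqrt{\kappa a}\,\delta+\kappa\delta^2\lesssim\kappa\delta^{1+\beta/2}$, so that the Taylor expansion of $e^{\mu h}-1-\mu h$ agrees with $(\mu h)^2/2$ up to errors absorbable into the $\kappa\eps$ term. A clean implementation uses a smooth cutoff $\Phi$ with $\Phi(x)=|x|^2$ on $\{|x|^2\leq a\}$ and $\Phi\leq 3a$ globally, rather than a hard stopping time, so that both the martingale bound and the full-interval control of $Q_t$ are obtained simultaneously.
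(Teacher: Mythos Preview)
Your overall strategy---a smooth cutoff at level $a=\kappa\delta^\beta$, an exponential martingale inequality, and the separate treatment of the set $\{|X|^2>a\}$ via $\mathbbm 1_{|X|^2>a}\leq |X|^2/a$---matches the spirit of the paper's proof. Your derivation of \eqref{AA-10} from \eqref{AA-1} by taking $\beta=2$, $\eps=\delta^{4-\alpha}$ and multiplying through by $\delta^{\alpha-2}$ is exactly what the paper does (note a small arithmetic slip: $\delta^{\alpha-2}\cdot\delta^{-2}=\delta^{\alpha-4}$; it is the term $\delta^{\alpha-2}\cdot\eps^{-1}=\delta^{2\alpha-6}$ that dominates).

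There is, however, a genuine gap in the argument for \eqref{AA-1}: the boundary term. Applying It\^o to $|X_\cdot|^2$ produces $|X_t|^2-|X_0|^2$ on the right, bounded only by $\kappa$; switching to your cutoff $\Phi$ improves this to $|\Phi(X_t)-\Phi(X_0)|\leq 3a=3\kappa\delta^\beta$, but this constant still cannot be absorbed into the right side of \eqref{AA-1}. Pushing it into $\kappa\delta^\beta\log\zeta$ would replace $\zeta$ by $e^3\zeta$, destroying $\mE\zeta\leq 1$; and $\kappa\eps$ with $\eps$ arbitrary in $(0,1)$ does not dominate $\kappa\delta^\beta$---in the very application to \eqref{AA-10} one has $\eps=\delta^{4-\alpha}\ll\delta^2=\delta^\beta$. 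So the cutoff alone is insufficient.

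The paper eliminates the boundary term entirely by a time-averaging device going back to \cite{KT2001}. Writing $\rho$ for a smooth function with $\rho(u)=u$ on $[0,4]$ and $\rho$ bounded, one uses
\[
\delta^{-\beta}\int_0^t|X_s|^2\,\dif s\geq\int_0^t\rho(\delta^{-\beta}|X_s|^2)\,\dif s
\geq\int_\eps^t\rho\,\dif s-\int_0^{t-\eps}\rho\,\dif s
=\int_0^t\eps_s\,\dif\rho(\delta^{-\beta}|X_s|^2),
\]
with $\eps_s:=(t-\eps)\wedge s-0\vee(s-\eps)$. Now It\^o's formula for $\rho(\delta^{-\beta}|X_\cdot|^2)$ is integrated against the weight $\eps_s$: there is no boundary contribution, and the weight $\eps_s\approx\eps$ multiplies both the quadratic-variation terms (producing $\eps\delta^{-\beta}Q_t$ on the right) and the martingale (so the exponential inequality yields $C\eps\log\zeta$). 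Dividing by $C\eps\delta^{-\beta}$ then gives exactly $\kappa\delta^\beta\log\zeta^{1/C}$ with $\mE\zeta^{1/C}\leq 1$, and the remaining drift, second-derivative, and jump-remainder terms are handled much as you outline. This time-averaging trick is the one missing ingredient in your proposal.
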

\begin{proof}
By replacing $(X_t,f_t^k,g_t(z))$  with $(X_t,f_t^k,g_t(z))/\sqrt{\kappa}$, we may assume that
\begin{align}\label{Bo}
|X_t|^2\vee |f^0_t|^2\vee |f^k_t|^2\vee\frac{|g_t(z)|^2}{1\wedge|z|^2}\leq 1.
\end{align}
Notice that for $t\leq 2\eps$,
\begin{eqnarray*}
\int_0^t |f_s^k|^2 \dif s+  \int_0^t\!\!\int_{|z|\leq \delta }|g_s(z)|^2  \nu(\dif z)\dif s\leq 2\eps\left(1+\int_{|z|\leq 1}|z|^2\nu(\dif z)\right).
\end{eqnarray*}
This implies  (\ref{AA-1}) with $\zeta\equiv 1$.
Below, without loss of generality, we assume $t>2\eps$ and $m=1.$ 
Following the proof in \cite{KT2001}, let $\rho(u):\mR_+\to\mR_+$ be an increasing smooth function with
\begin{align}\label{Rho}
\mbox{$\rho(u)=u$ for $0\leq u<4$ and $\rho(u)=\tfrac{9}{2}$ for  $u>6$,  $\rho'(u)\leq 1$.}
\end{align}
Since $\rho(u)\leq u$ for $u\geq 0$, letting $\eps_s:=(t-\eps)\wedge s-0\vee (s-\eps)$, we have
\begin{align*}
\delta^{-\beta}\int_0^t |X_s|^2\dif s&\geq \int_0^t\rho(\delta^{-\beta}|X_s|^2)\dif s
\geq\int^t_\eps\rho(\delta^{-\beta}|X_s|^2)\dif s-\int^{t-\eps}_0\rho(\delta^{-\beta}|X_s|^2)\dif s\\
&=\int_0^{t-\eps}\dif s\int_s^{s+\eps }\dif \rho(\delta^{-\beta} |X_r|^2 )
=\int_0^t\eps_s\dif \rho(\delta^{-\beta} |X_s|^2),
\end{align*}
 where  the second equality is due to Fubini's theorem.
By It\^o's formula, we obtain
\begin{align*}
\delta^{-\beta}\int_0^t|X_s|^2\dif s&\geq 2\delta^{-\beta}\int_0^t \eps_s\rho'_sX_s f^0_s\dif s+\delta^{-\beta}\int_0^t \eps_s\big\{\rho'_s+2\delta^{-\beta}|X_s|^2\rho''_s\big\}|f^k_s|^2\dif s\\
&+\left\{2\delta^{-\beta}\int_0^t\eps_s\rho'_sf _sf^k_s\dif W^k_s+\int_0^t\!\!\int_{|z|\leq \delta}\eps_s(\tilde\rho_s(z)-\rho_s)\widetilde{N}(\dif s,\dif z)\right\}\\
&+\int_0^t\!\!\int_{|z|\leq \delta}\eps_s\Big((\tilde\rho_s(z)-\rho_s )
 -2\delta^{-\beta} \rho'_s X_s  g_s(z) \Big)\nu(\dif z)\dif s=:\sum_{i=1}^4I_i(t),
\end{align*}
where $\tilde\rho_s(z):=\rho(\delta^{-\beta} |X_s+g_s(z)|^2)$ and
$$
\rho_s:=\rho(\delta^{-\beta} |X_s|^2),\ \ \rho'_s:=\rho' (\delta^{-\beta} |X_s|^2),\ \ \rho''_s:=\rho'' (\delta^{-\beta} |X_s|^2).
$$
For $I_1(t)$, thanks to $|\eps_s|\leq\eps,$ by \eqref{Bo} and \eqref{Rho} we have
 $$
|I_1(t)|\leq \delta^{-\beta} \int_0^t\big(|X_s|^2+ |\eps_sf^0_s|^2\big)\dif s  \leq   \delta^{-\beta}\int_0^t|X_s|^2\dif s+\delta^{-\beta}\eps^2 t.
 $$
For $I_2(t)$, noticing that
 \begin{eqnarray} \label{B-2}
|\eps_s-\eps|\leq \eps \big\{1_{(0,\eps)}(s)+1_{(t-\eps,t)}(s)\big\}
 \end{eqnarray}
and
\begin{eqnarray}  \label{B-3}
  \eps_s\rho'_s=\eps+\eps( \rho'_s-1)+(\eps_s-\eps)\rho'_s\geq \eps-\eps\delta^{-\beta}|X_s|^2-|\eps_s-\eps|,
\end{eqnarray}
where we have used $|\rho'(u)-1|\leq u$ and $|\rho'(u)|\leq 1$, we have
\begin{align*}
  I_2(t)&\geq\delta^{-\beta}\int_0^t\eps_s \rho'_s|f^k_s|^2   \dif s-2\|\rho''\|_\infty\delta^{-2\beta}\int_{0}^t\eps_s|X_s|^2|f^k_s|^2\dif s\\
  &\geq \delta^{-\beta}\int_0^t\Big\{\eps-\eps\delta^{-\beta}|X_s|^2-|\eps_s-\eps|\Big\}|f^k_s|^2\dif s-2\|\rho''\|_\infty\delta^{-2\beta}\eps\int_{0}^t|X_s|^2\dif s\\
  &\geq  \eps\delta^{-\beta}\int_0^t|f^k_s|^2  \dif s-(1+2\|\rho''\|_\infty)\eps\delta^{-2\beta}\int_{0}^t|X_s|^2\dif s-\delta^{-\beta}\int^t_0|\eps_s-\eps|\dif s\\
  &=  \eps\delta^{-\beta}\int_0^t|f^k_s|^2  \dif s-(1+2\|\rho''\|_\infty)\eps\delta^{-2\beta}\int_{0}^t|X_s|^2\dif s-2\delta^{-\beta}\eps^2.
\end{align*}
For $I_3(t)$, noticing that
$$
|\eps_s(\tilde\rho(z)-\rho )|\leq \eps\Big\{9\wedge (\delta^{-\beta}|\,|X_s+g_s(z)|^2-|X_s|^2|)\Big\},
$$
by \cite[Lemma 4.1]{Zhang16} with $R=\frac{1}{9\eps}$, there is a  positive random variable $\zeta_1$  with $\mE\zeta_1\leq 1$ so that
\begin{align*}
  -I_3(t)-9\eps\log \zeta_1
&  \leq \frac{2\delta^{-2\beta}}{9\eps}\int_0^t|\eps_s\rho'_s X_s|^2|f^k_s|^2\dif s
  +\frac{2}{9\eps}\int_0^t\!\! \int_{|z|\leq \delta}\eps_s^2(\tilde\rho_s(z)-\rho_s)^2\nu(\dif z)\dif s
  \\ &  \stackrel{\eqref{Bo}}{\leq} \frac{2\eps\delta^{-2\beta}}{9}\left[\int_0^t|X_s|^2 \dif s
  +\int_0^t \int_{|z|\leq \delta}\big(|X_s+g_s(z)|^2-X_s^2\big)^2 \nu(\dif z)\dif s\right]
   \\ &  \leq \frac{2\eps\delta^{-2\beta}}{9}\left[\left(1+4\int_{|z|\leq\delta}\!\!|z|^2\nu(\dif z)\right)\int_0^t|X_s|^2 \dif s
  +\int_0^t\!\!\int_{|z|\leq \delta}|g_s(z)|^4 \nu(\dif z)\dif s\right]
   \\ & \stackrel{\eqref{Bo}}{\leq} C\eps\delta^{-2\beta}\int_0^t|X_s|^2 \dif s
  +\frac{2\eps\delta^{2-2\beta}}{9}\int_0^t\!\!\int_{|z|\leq \delta}|g_s(z)|^2 \nu(\dif z)\dif s.
\end{align*}
For $I_4(t)$, noticing that by Taylor's expansion for $x\mapsto\rho(\delta^{-\beta}x)$,
\begin{align*}
&\rho(\delta^{-\beta}|X+g|^2)-\rho(\delta^{-\beta}|X|^2)-2\delta^{-\beta}Xg\rho'(\delta^{-\beta}|X|^2)\\
&=\delta^{-\beta}\rho'(\delta^{-\beta}|X|^2)g^2+\tfrac{1}{2}\delta^{-2\beta}\rho''(\vartheta)(2Xg+|g|^2)^2,
\end{align*}
where $\vartheta:=\delta^{-\beta}(r|X|^2+(1-r)|X+g|^2)$ for some $r\in[0,1]$, we can write
   \begin{align*}
 I_4(t)& = \frac{\delta^{-2\beta} }{2}\int_0^t\!\!\int_{|z|\leq \delta }\eps_s\rho''(\vartheta_s(z))(2X_sg_s(z)+|g_s(z)|^2)^2\nu(\dif z)\dif s\\
&  +\delta^{-\beta} \int_0^t\!\!\int_{|z|\leq \delta }\eps_s \rho'_s|g_s(z)|^2\nu(\dif z)\dif s=:I_{41}(t)+I_{42}(t),
 \end{align*}
 where $\vartheta_s(z):=\delta^{-\beta}(r|X_s|^2+(1-r)|X_s+g_s(z)|^2)$.
 For $I_{41}(t)$, noticing that
 $$
|X_s|\vee|g_s(z)|\leq\delta\Rightarrow\vartheta_s(z)\leq \delta^{-\beta}(r\delta^2+4(1-r)\delta^2)\leq 4\delta^{2-\beta}\leq 4,
 $$
in virtue of $\rho''(u)=0$ for $u\leq 4$,  by \eqref{Bo} we have
\begin{align*}
|I_{41}(t)| &=\frac{\delta^{-2\beta} }{2}\left|\int_0^t\!\!\int_{|z|\leq \delta<|X_s| }\eps_s\rho''(\vartheta_s(z))(2X_sg_s(z)+|g_s(z)|^2)^2\nu(\dif z)\dif s\right|\\
&\leq \frac{\delta^{-2\beta}}{2}\eps\|\rho''\|_\infty\left(\int_{|z|\leq \delta}|z|^2\nu(\dif z)\right)\int_0^t(3|X_s|)^2\dif s.
\end{align*}
 For $I_{42}(t)$, as in the treatment of $I_2(t)$, by \eqref{B-3} we have
 \begin{align*}
 I_{42}(t) &\geq \delta^{-\beta} \int_0^t\!\!\int_{|z|\leq \delta }\Big\{\eps-\eps\delta^{-\beta}|X_s|^2-|\eps_s-\eps|\Big\}|g_s(z)|^2\nu(\dif z)\dif s\\
  &\geq \eps\delta^{-\beta} \int_0^t\!\!\int_{|z|\leq \delta }|g_s(z)|^2\nu(\dif z)\dif s- \delta^{-\beta}\int_0^t(\eps\delta^{-\beta}|X_s|^2+|\eps_s-\eps|)\dif s
  \int_{|z|\leq\delta}|z|^2\nu(\dif z)\\
&\geq \eps\delta^{-\beta}\int_0^t\!\!\int_{|z|\leq \delta }|g_s(z)|^2\nu(\dif z)\dif s-\left(\eps\delta^{-2\beta} \int_0^t|X_s|^2\dif s+2\eps^2\delta^{-\beta}\right)\int_{|z|\leq1}|z|^2\nu(\dif z).
 \end{align*}
Combining the above estimates, we get for some $C\geq 9$ only depending on $\int_{|z|\leq 1}|z|^2\nu(\dif z)$,
 \begin{align*}
   & \delta^{-\beta}\Big\{2+C\eps\delta^{-\beta}\Big\} \int_0^t|X_s|^2\dif s+9\eps\log \zeta_1+C\eps^2\delta^{-\beta}
   \\ & \geq \eps\delta^{-\beta}\int_0^t|f^k_s|^2   \dif s+\left(\eps\delta^{-\beta}-\tfrac{2}{9}\eps\delta^{2-2\beta}\right)\int_0^t\!\!\int_{|z|\leq \delta}|g_s(z)|^2\nu(\dif z)\dif s,
 \end{align*}
which gives \eqref{AA-1} with $\zeta=\zeta_1^{9/C}$
by dividing both sides by $C\eps\delta^{-\beta}$.

If $\nu(\dif z)=\dif z/|z|^{d+\alpha}$, then by $g_s(z)=\Gamma_s z+\widetilde g_s(z)$ and (\ref{z-4}), we have
\begin{eqnarray}\label{pp-1}
\begin{split}
  \int_0^t\!\! \int_{|z|\leq \delta }|g_s(z)|^2\nu(\dif z)
  &\geq \frac{1}{2} \int_0^t\!\! \int_{|z|\leq \delta }|\Gamma_s z|^2\nu(\dif z)
  - \int_0^t\!\! \int_{|z|\leq \delta }|\widetilde g_s(z)|^2\nu(\dif z)
    \\ & \geq  c\delta^{2-\alpha}\int_0^t\|\Gamma_s\|^2_{HS}\dif s
  -\kappa \int_0^t\!\! \int_{|z|\leq \delta }|z|^4\nu(\dif z) \\
  &\geq  c\delta^{2-\alpha}\int_0^t \|\Gamma_s\|^2_{HS}\dif s- C\kappa\delta^{4-\alpha},
  \end{split}
\end{eqnarray}
where in the second inequality we have used that
$$
\int_{|z|\leq \delta}z_iz_j\dif z/|z|^{d+\alpha}=c {\bf 1}_{i=j}\delta^{2-\alpha}.
$$
Substituting this into (\ref{AA-1}) and taking $\beta=2$ and $\eps=\delta^{4-\alpha}$, we get 
$$
c_0\int_0^t|f_s^k|^2\dif s+c\delta^{2-\alpha}\int_0^t \|\Gamma_s\|^2_{HS}\dif s\leq 
(\delta^{\alpha-4}+\delta^{-2})\int_0^t|X_s|^2\dif s +\kappa \delta^2\log \zeta+\kappa\delta^{4-\alpha},
$$
which gives \eqref{AA-10} by dividing both sides by $\delta^{2-\alpha}$.
\end{proof}
\br
If we take $\beta=1$ and $\eps=\delta$ in \eqref{AA-1}, then \eqref{AA-1} reduces to \eqref{AA-11} with $\eps=\delta$.
Here the key point for us is the estimate \eqref{AA-10}, which can not be derived from \eqref{AA-11}.
\er

\section{Estimates of Laplace transform of reduced Malliavin matrix}

The result of this section is independent of the framework in Section 2.
Consider the following SDE with jumps:
\begin{eqnarray}\label{SDE0}
  X_t=x+\int_0^tb(X_s)\dif s+\int_0^t\sigma_k(X_s)\dif W^k_s+\int_0^t\!\!\int_{|z|<1}g(X_{s-},z)\widetilde{N}(\dif s,\dif z).
\end{eqnarray}
It is well known that under {\bf (H$_1$)}, SDE \eqref{SDE0} admits a unique solution denoted by $X_t=X_t(x)$, and $x\mapsto X_t(x)$ are smooth. 
Let $J_t:=J_t(x):=\nabla X_t(x)$ be the Jacobian matrix of $X_t(x)$. It is also well known that $J_t$ solves the following linear matrix-valued SDE:
\begin{eqnarray}\label{Q22}
\begin{split}
  J_t&=\mI+\int_0^t \nabla b(X_s)J_s\dif s+\int_0^t \nabla \sigma_k(X_s)J_s\dif W^k_s+\int_0^t\!\!\int_{|z|<1}\nabla_xg(X_{s-},z)J_{s-}\widetilde{N}(\dif s,\dif z).
  \end{split}
\end{eqnarray}
Moreover, under {\bf (H$_g^{\rm o}$)},  the matrix $J_t(x)$ is invertible. Let
$K_t:=K_t(x)$ be the inverse matrix of $J_t(x).$ By It\^o's formula, $K_t$ solves the following linear matrix-valued SDE:
\begin{align}\label{Q2}
\begin{split}
  K_t&=\mI-\int_0^tK_s\big[\nabla b(X_s)-(\nabla \sigma_k)^2(X_s)\big]\dif s-\int_0^tK_s \nabla\sigma_k(X_s)\dif W^k_s
  \\ &+\int_0^t\!\!\int_{|z|<1}K_{s-}Q(X_{s-},z)\widetilde{N}(\dif s,\dif z)
-\int_0^t\!\!\int_{|z|<1}K_{s-}(Q\cdot\nabla_xg)(X_{s-},z)\nu(\dif z)\dif s,
\end{split}
\end{align}
 where
   \begin{eqnarray*}
Q(x,z):= (\mI+\nabla_xg(x,z))^{-1}-\mI=-(\mI+\nabla_xg(x,z))^{-1}\cdot\nabla_xg(x,z).
 \end{eqnarray*}
 
Below we introduce some notations for later use.
\begin{enumerate}[$\bullet$]
\item For $k=1,\cdots,d$, let $A_0, A_k,\widetilde A_k$ be defined as in the introduction.
For simplicity, we write
\begin{align}\label{KP5}
A:=(A_1,\cdots, A_d),\ \ \widetilde A:=(\widetilde A_1,\cdots,\widetilde A_d).
\end{align}
\item For a vector field $V(x):={\rm v}_i(x)\p_i$, we also identify
$$
V:=({\rm v}_1,\cdots,{\rm v}_d).
$$
\item For a smooth vector field $V:\mR^d\to\mR^d$, define 
$$
\overline{V}:=[A_0,V]+\tfrac{1}{2}[A_k,[A_k,V]],
$$
and for $\delta\in(0,1)$,
\begin{align}\label{TG1}
\begin{split}
&G_V(x,z):=V(x+g(x,z))-V(x)+Q(x,z)V(x+g(x,z)),\\
&H^\delta_V(x):=\int_{|z|\leq \delta }\big[G_V(x,z)+\nabla_xg(x,z)\cdot V(x)-g(x,z)\cdot \nabla V(x)\big]\nu(\dif z).
\end{split}
\end{align}
 \item For a row vector $u\in\mR^d$ and a smooth vector field $V:\mR^d\to\mR^d$, define 
\begin{align}\label{HH0}
\begin{split}
 \sH_t(u,x)&:=\int_0^t|uK_s(x)V(X_s(x))|^2\dif s,\\
\sW_t(u,x)&:=\int_0^t|uK_s(x) ([A,V],[\widetilde A,V],\overline{V})(X_s(x))|^2 \dif s.
\end{split}
\end{align}
\end{enumerate}

We need the following easy lemma.
 \bl\label{Le22}
Let $V\in C^\infty_p(\mR^d;\mR^d)$. Under {\bf (H$_1$)} and {\bf (H$_g^{\rm o}$)}, we have
\begin{enumerate}[(i)]
\item For any $\delta\in(0,1)$, there are $m\in\mN_0$ and $C>0$ such that for all $x\in\mR^d$ and $|z|\leq 1$,
$$
|Q(x,z)|\leq C|z|,\ \ |G_V(x,z)|\leq C(1+|x|^m)|z|,\ \ |H^\delta_V(x)|\leq C(1+|x|^m).
$$
\item $[A_k,V],[\widetilde A_k,V], \overline V\in C^\infty_p(\mR^d;\mR^d)$.
\item $\nabla_z  G_V(x,0)=[\widetilde{A},V](x).$
\end{enumerate}
 \el
\begin{proof}
We only prove (iii). Let $R(x,z):=(\mI+\nabla_xg(x,z))^{-1}$. Note that
$$
G_V(x,z)=R(x,z) V(x+g(x,z))-V(x)
$$
and
$$
\nabla_zG_V(x,0)=\nabla_z R(x,0) V(x+g(x,0))+R(x,0)\nabla V(x+g(x,0))\nabla_zg(x,0).
$$
Since $g(x,0)=0$, $R(x,0)=\mI$ and $\nabla_z R(x,0)=-\nabla_x\nabla_zg(x,0)$, we get
$$
\nabla_zG_V(x,0)=\nabla V(x)\nabla_zg(x,0)-\nabla_x\nabla_zg(x,0)V(x).
$$
The proof is complete.
\end{proof}
\br
Under {\bf (H$_1'$)} and $V\in C^\infty_b(\mR^d;\mR^d)$, the $m$ in the above (i) can be zero.
\er
The following lemma is standard by BDG's inequality (see Lemma \ref{Le2}). We omit the details.
 \begin{lemma}\label{15}
   Under  {\bf (H$_1$)} and {\bf (H$_g^{\rm o}$)}, for any $p\geq 1,$ we have
   \begin{eqnarray*}
     \sup_{x\in \mR^d}\mE\left(\sup_{t\in [0,1]} \left(\frac{|X_t(x)|^p}{1+|x|^p}+|J_t(x)|^p+|K_t(x)|^p\right)\right)<+\infty.
   \end{eqnarray*}
 \end{lemma}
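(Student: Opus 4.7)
The plan is to apply the BDG-type inequalities of Lemma \ref{Le2} followed by Gronwall's lemma to each of the three SDEs driving $X_t$, $J_t$ and $K_t$. Since all three are essentially linear in the unknown (the coefficients in \eqref{SDE0} have linear growth, while \eqref{Q22} and \eqref{Q2} are genuinely linear in $J,K$), the routine moment machinery works; the only thing to keep track of is that all constants can be chosen uniformly in the initial value $x\in\mR^d$. I would establish the bound first for $p\geq 2$ (where \eqref{BT2} is directly available) and then deduce the case $p\in[1,2)$ from H\"older's inequality.

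For the first term, note that under {\bf (H$_1$)} we have $|b(x)|+|\sigma_k(x)|\leq C(1+|x|)$, and since $g(x,0)=0$ together with the bound on $\nabla_z g$ from {\bf (H$_1$)} gives $|g(x,z)|\leq C|z|$ for $|z|<1$; moreover $\int_{|z|<1}|z|^p\nu(\dif z)<\infty$ for every $p\geq\alpha$. Taking $p$-th moment in \eqref{SDE0}, applying \eqref{BT2} to the compensated Poisson integral and the standard Brownian BDG to the Wiener integral, one arrives at
\begin{align*}
\mE\sup_{s\leq t}|X_s(x)|^p\leq C_p\left(|x|^p+1+\int_0^t \mE\sup_{r\leq s}|X_r(x)|^p\,\dif s\right),
\end{align*}
and Gronwall yields $\sup_{x}(1+|x|)^{-p}\,\mE\sup_{t\leq 1}|X_t(x)|^p<\infty$, which is the claim for the $X$-term.

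For the Jacobian, the linear matrix SDE \eqref{Q22} has globally bounded coefficients $\nabla b(X_s)$, $\nabla\sigma_k(X_s)$ under {\bf (H$_1$)}, together with $|\nabla_x g(x,z)|\leq C|z|$ for $|z|<1$, so the same BDG$+$Gronwall scheme applied to $t\mapsto\mE\sup_{s\leq t}|J_s|^p$ yields $\sup_x\mE\sup_{t\leq 1}|J_t(x)|^p<\infty$. For the inverse Jacobian solving \eqref{Q2} the crucial input is {\bf (H$_g^{\rm o}$)}: the uniform lower bound on $\det(\mI+\nabla_x g)$ makes $(\mI+\nabla_x g)^{-1}$ uniformly bounded, so by Lemma \ref{Le22}(i) we have $|Q(x,z)|\leq C|z|$ on $|z|<1$, and the compensator drift $\int_{|z|<1}Q(x,z)\nabla_x g(x,z)\,\nu(\dif z)$ is uniformly bounded in $x$. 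Hence \eqref{Q2} is again a linear matrix SDE in $K_t$ with bounded coefficients, and BDG$+$Gronwall close the argument. There is no substantive obstacle; the only point of care is the uniform-in-$x$ bookkeeping of constants in the $K$-equation, which is exactly what {\bf (H$_g^{\rm o}$)} is invoked to secure.
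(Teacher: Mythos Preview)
Your proposal is correct and follows exactly the route the paper indicates: it states that the lemma ``is standard by BDG's inequality (see Lemma \ref{Le2})'' and omits the details, so your BDG$+$Gronwall scheme for each of the three linear(-growth) SDEs is precisely what is intended. One small imprecision: under {\bf (H$_1$)} alone the bound on $g$ is $|g(x,z)|\leq C(1+|x|)|z|$ rather than $C|z|$ (since {\bf (H$_1$)} controls $\nabla_x^i\nabla_z^j g$ only for $i\geq 1$; cf.\ the linear-growth remark in the proof of Lemma \ref{Le34}), but this does not affect your argument since linear growth is all that is used for the $X$-estimate.
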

Now we can show the following crucial lemma.
\begin{lemma}\label{p-6}
Let $\beta:=\frac{\alpha\wedge(2-\alpha)}{8}$ and $d_0\in\mN$. Under  {\bf (H$_1$)} and {\bf (H$_g^{\rm o}$)},
for any $C_p^\infty$-function $V:\mR^d\rightarrow \mR^d\otimes\mR^{d_0}$,
there exist constants $c\in (0,1),C\geq 1$ such that for all $\delta,t\in(0,1),x\in \mR^d$ and $p\geq 1$,
\begin{eqnarray}
\sup_{|u|=1}\mP\Big\{\sH_t(u,x)\leq \delta^7 t,  \sW_t(u,x)\geq \delta^\beta t\Big\}\leq C_p(x)\delta^p+C\e^{-c\delta^{-\beta}t},\label{12}
\end{eqnarray}
where $\sH_t(u,x)$ and $\sW_t(u,x)$ are defined by \eqref{HH0}.
Moreover, under {\bf (H$_1'$)} and $V\in C^\infty_b$, the constant $C_p(x)$ can be independent of $x.$
\end{lemma}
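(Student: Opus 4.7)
\textbf{Step 1: Semimartingale decomposition of the test semimartingale.} Fix $|u|=1$ and set $\Psi_s:=uK_s(x)V(X_s(x))$. Using the SDEs \eqref{SDE0} and \eqref{Q2} and Itô's formula, I compute a decomposition of the form
\begin{align*}
\Psi_t=\Psi_0+\int_0^t F_s^0\,\dif s+\int_0^t F_s^k\,\dif W_s^k+\int_0^t\!\!\!\int_{|z|<1} G_s(z)\,\widetilde N(\dif s,\dif z),
\end{align*}
where a direct computation, using the definition of $A_0,A_k$ in \eqref{VEC} (the $-\tfrac12\sigma^j_k\p_j\sigma^i_k$ correction absorbs the Stratonovich correction) and the definition of $Q(x,z)$, yields
\begin{align*}
F_s^k=uK_s[A_k,V](X_s),\qquad G_s(z)=uK_s\,G_V(X_{s-},z),\qquad F_s^0=uK_s\overline V(X_s)+uK_s H_V^\delta(X_s).
\end{align*}
Here the small-jump compensator $H_V^\delta$ from \eqref{TG1} arises precisely so that the Brownian-drift part of $\Psi$ is exactly $\overline V=[A_0,V]+\tfrac12[A_k,[A_k,V]]$ (plus a uniformly bounded error).

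\textbf{Step 2: Isolation of the $[\widetilde A,V]$ direction in the jump part.} By Lemma~\ref{Le22}(iii), $\nabla_z G_V(x,0)=[\widetilde A,V](x)$. Hence, by Taylor expansion in $z$,
\begin{align*}
G_s(z)=\Gamma_s\cdot z+\widetilde g_s(z),\qquad \Gamma_s:=uK_s[\widetilde A,V](X_s),\qquad |\widetilde g_s(z)|\leq C_{x,M}\,|z|^2 \ (|z|\leq 1),
\end{align*}
on a stopping-time localization $[0,\tau_M]$ with $\tau_M:=\inf\{s:|X_s|+|K_s|>M\}$. This puts $(\Psi_s,F_s^k,G_s)$ into the form required by \eqref{z-4}.

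\textbf{Step 3: Apply the improved Komatsu--Takeuchi estimate.} On $[0,\tau_M]$ all the processes above lie in $\sS^{\kappa}$ with $\kappa=C(1+M^m)$, for some $m$ depending on $V$. Lemma~\ref{8-1}, inequality \eqref{AA-10}, then gives, for any $\delta\in(0,1)$,
\begin{align*}
c_0\!\int_0^{t\wedge\tau_M}\!\!\Big(|F_s^k|^2+\|\Gamma_s\|_{HS}^2\Big)\dif s\leq \delta^{2\alpha-6}\!\int_0^t|\Psi_s|^2\dif s+\kappa\delta^\alpha\log\zeta_1+\kappa\delta^2.
\end{align*}
To control the drift $F_s^0$, I apply Itô once more to expand $F_s^0=uK_s(\overline V+H_V^\delta)(X_s)$ as a semimartingale (its coefficients involve further Lie brackets and are controlled on $[0,\tau_M]$ by $C(1+M^{m'})$), and then apply \eqref{Eso2} with a suitable $\eps=\delta^{\rho}$ to obtain
\begin{align*}
c_1\!\int_0^{t\wedge\tau_M}\!\!|F_s^0|^2\dif s\leq C\delta^{-3/2}\!\int_0^t|\Psi_s|^2\dif s+\kappa\delta^{1/2}\log\zeta_2+\kappa\delta^{\beta'}
\end{align*}
for some $\beta'>0$ (using that $|H_V^\delta|\leq C\delta^{2-\alpha}$ on $[0,\tau_M]$).

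\textbf{Step 4: Exponential-Markov step and localization.} On the event $\{\sH_t(u,x)\leq \delta^7 t\}$ we have $\int_0^t|\Psi_s|^2\dif s\leq \delta^7 t$. Substituting this into the two estimates of Step~3, and recalling $\sW_t(u,x)=\int_0^{t}\!|uK_s([A,V],[\widetilde A,V],\overline V)(X_s)|^2\dif s$, yields, on $\{t\leq\tau_M\}\cap\{\sH_t\leq\delta^7t\}$,
\begin{align*}
\sW_t(u,x)\leq C\kappa\Big(\delta^{2\alpha+1}t+\delta^{11/2}t+\delta^\alpha\log\zeta_1+\delta^{1/2}\log\zeta_2+\delta^{\beta'}\Big).
\end{align*}
The exponent $\beta=\tfrac{\alpha\wedge(2-\alpha)}{8}$ is chosen strictly below each of the exponents $\alpha,1/2,\beta',2\alpha+1,11/2$ appearing above, so every non-random term is $\leq \tfrac14\delta^\beta t$ for $t\leq 1$ and $\delta$ small. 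Hence on this event $\sW_t\geq \delta^\beta t$ forces $\kappa\log\zeta_i\geq c\delta^{-\beta}t$ for some $i\in\{1,2\}$; exponential Markov and $\mE\zeta_i\leq 1$ produce $C\exp(-c\delta^{-\beta}t/\kappa)$. Finally, Lemma~\ref{15} controls $\mP(\tau_M<t)\leq C_p(x)M^{-p}$; choosing $M$ to be a small negative power of $\delta$ (so that $\kappa=C(1+M^{m\vee m'})$ is still a negative power of $\delta$ but does not spoil $\delta^{-\beta}t/\kappa\geq c\delta^{-\beta}t$ after possibly shrinking $\beta$ within the allowed range) gives the two terms $C_p(x)\delta^p+Ce^{-c\delta^{-\beta}t}$.

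\textbf{Main obstacle.} The delicate point is the balancing in Step~4: one must choose the localization power $M=\delta^{-\eta}$ and the auxiliary parameter $\eps$ in the drift estimate \eqref{Eso2} so that (a) every non-random remainder in the combined estimate remains $o(\delta^\beta t)$ uniformly in the allowed range of $\alpha\in(0,2)$, (b) the polynomial blow-up of $\kappa=C(1+M^{m\vee m'})$ does not destroy the exponential factor $e^{-c\delta^{-\beta}t}$, and (c) the tail of $\tau_M$ still yields the $C_p(x)\delta^p$ term. Because both $\delta^\alpha$ (small-jump quadratic-variation remainder) and $\delta^{2-\alpha}$ (size of $H_V^\delta$) enter the argument, the critical exponent is the harmonic-mean-type $\alpha\wedge(2-\alpha)$ divided by a safety factor ($=8$ here) coming from the multiple applications of Komatsu--Takeuchi and of the drift estimate.
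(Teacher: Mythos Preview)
Your outline captures the overall architecture (It\^o expansion of $uK_sV(X_s)$, extract $[A,V]$, $[\widetilde A,V]$, $\overline V$ via Lemma~\ref{8-1} and \eqref{Eso2}, then localize and use exponential Markov), but it has a genuine gap that the paper's proof is largely devoted to filling.

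The Komatsu--Takeuchi estimates you invoke, both \eqref{AA-10} and \eqref{Eso2}, apply only to semimartingales whose Poisson integral is over $\{|z|\leq\delta\}$. Your $\Psi_t$ in Step~1 has its jump integral over $\{|z|<1\}$, and indeed your decomposition is internally inconsistent: the drift term $H_V^\delta$ (which by \eqref{TG1} is an integral over $|z|\leq\delta$) only appears if the jump part is already truncated at level $\delta$; with jumps over $|z|<1$ the corresponding compensator would be $H_V^1$, and the Poisson term would remain full-range. You cannot simply absorb the large-jump contribution $\int_0^t\!\int_{\delta<|z|<1}G_s(z)\widetilde N(\dif s,\dif z)$ into the drift $f^0$, since it is not absolutely continuous in $t$; nor can you apply \eqref{AA-10} with $\delta=1$, since the smallness of $\delta$ is what drives the whole argument.

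The paper resolves this by a conditioning/splitting argument you have not mentioned: write $L=L^\delta+\hat L^\delta$ (small and large jumps), freeze the large-jump path $\hbar$, and consider the auxiliary SDE driven by $W$ and $L^\delta$ with finitely many deterministic kicks $g(\cdot,\Delta\hbar_s)$. On each inter-jump interval this is a small-jump semimartingale, so \eqref{AA-10} and \eqref{Eso2} apply and yield the exponential-moment bound \eqref{3-1} for $\hbar=0$. The Markov property then chains the segments, and averaging over the Poisson number $N_t^\delta$ of large jumps (intensity $\lambda_\delta\asymp\delta^{-\alpha}$) produces a convergent series precisely because the per-segment error is $e^{\delta^\alpha}$ and $(e^{\delta^\alpha}-1)\lambda_\delta=O(1)$. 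This large-jump bookkeeping is the heart of the proof and is absent from your plan.

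A secondary point: your Step~4 is too vague about the localization. The paper chooses the threshold $\delta^{-\gamma/2}$ with $\gamma=\tfrac{\alpha\wedge(2-\alpha)}{4}$, so that $\kappa=\kappa_0\delta^{-\gamma}$ and, after multiplying the combined Komatsu--Takeuchi inequality by $\delta^{-\gamma}$, the remainder exponents $\alpha-2\gamma$ and $\tfrac12\wedge(4-2\alpha)-2\gamma$ stay strictly positive. The stated value $\beta=\gamma/2=\tfrac{\alpha\wedge(2-\alpha)}{8}$ is not a ``safety factor'' but the output of this exact balancing; your ``possibly shrinking $\beta$'' would not recover the claimed constant.
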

\begin{proof}
We divide the proof into four steps.

\textbf{(1)}
Fixing $\delta\in(0,1)$, we make the following decomposition:
$$
L_t:=\int^t_0\!\!\int_{|z|\leq\delta}z\widetilde N(\dif s,\dif z)+\int^t_0\!\!\int_{\delta<|z|<1}zN(\dif s,\dif z)=:L^\delta_t+\hat L^\delta_t,
$$
where $L^\delta$ and $\hat L^\delta$ are the small jump part and  large jump part of $L$, respectively. Clearly,
$$
\mbox{$L^\delta_\cdot$ and $\hat L^\delta_\cdot$ are independent}.
$$
Let us fix a c\'adl\'ag path $\hbar:\mR_+\to\mR^d$ with finitely many jumps on the finite time interval. 
Let $X_t^\delta(x;\hbar)$ solve the following SDE:
\begin{align*}
X_t^\delta(x;\hbar)&=x+\int_0^tb(X_s^\delta(x;\hbar))\dif s+\int_0^t A_k(X_s^\delta(x;\hbar))\dif W_s^k
\\ & +\int_0^t\!\!\int_{|z|\leq \delta}g(X_{s-}^\delta(x;\hbar),z)\widetilde{N}(\dif s,\dif z)+\sum_{0<s\leq t}g(X_{s-}^\delta(x;\hbar),\Delta\hbar_s).
\end{align*}
Let $K^\delta_t(x;\hbar):=[\nabla X^\delta_t(x;\hbar)]^{-1}$. Clearly, by $g(x,-z)=-g(x,z)$, we have
\begin{align}
X_t(x)=X^\delta_t(x;\hbar)|_{\hbar=\hat L^\delta},\ \ K_t(x)=K^\delta_t(x;\hbar)|_{\hbar=\hat L^\delta},\label{EH2}
\end{align}
which implies that
\begin{align}\label{Def1}
\sH_t(u,x)=\sH_t^\delta(u,x;\hbar)|_{\hbar=\hat L^\delta},\ \sW_t(u,x)=\sW_t^\delta(u,x;\hbar)|_{\hbar=\hat L^\delta},
\end{align}
where for a row vector $u\in\mR^d$,
\begin{align}\label{Def0}
\begin{split}
\sH_t^\delta(u,x;\hbar)&:=\int_0^t|uK^\delta_s(x;\hbar)V(X^\delta_s(x;\hbar))|^2\dif s,\\
\sW_t^\delta(u,x;\hbar)&:=\int_0^t|uK^\delta_s(x;\hbar) ([A,V],[\widetilde A,V],\overline V)(X^\delta_s(x;\hbar))|^2 \dif s.
\end{split}
\end{align}

{\bf (2)} Below, we first consider the case $\hbar=0$. For simplicity, we drop the superscript $\delta$ and  write 
$$
K_t=K_t(x;0),\ \ X_t=X_t(x;0).
$$ 
Moreover, for fixed $u\in\mR^d$, we introduce the following processes: for $k=1,\cdots,d$,
\begin{align*}
  & f_t:=uK_t V(X_t),\ \
  f_t^0:=uK_t \overline{V}(X_t),\ \   h_t:=uK_t H^\delta_V(X_t),
  \\ & f_t^k:= uK_t [A_k,V](X_t),\quad g_t(z):=uK_{t-} G_V(X_{t-},z),\\
    &    f_t^{0k}:= uK_t[A_k,\overline{V}](X_t),\quad g_t^0(z):=uK_{t-} G_{\overline{V}}(X_{t-},z),
     \\  &   f_t^{00}:=uK_t 
     \Big([A_0,\overline{V}]+\tfrac{1}{2}[A_j,[A_j,\overline{V}]] + H^\delta_{\overline{V}}\Big)(X_t),
   \end{align*}
where $H^\delta_V$ and $G_V$ are defined by \eqref{TG1}.
Note that $K_t$ solves  the following equation (see \eqref{Q2}):
\begin{align*}
  K_t&=\mI-\int_0^tK_s \big[\nabla b(X_s)-(\nabla A_k)^2(X_s)\big]\dif s-\int_0^tK_s  \nabla  A_k(X_s)\dif W_s^k
  \\ & +\int_0^t\!\!\int_{|z|\leq \delta}K_{s-} Q(X_{s-},z)\widetilde{N}(\dif s,\dif z)
 -\int_0^t\!\!\int_{|z|\leq \delta}K_{s-} (Q\cdot\nabla_xg)(X_{s-},z)\nu(\dif z)\dif s.
\end{align*}
Using It\^o's formula, one finds that
\begin{align}\label{IY}
\begin{split}
 f_t&=uV(x)+\int_0^t (f_s^0+h_s) \dif s+\int_0^t f_s^k\dif W_s^k +\int_0^t\!\!\int_{|z|\leq \delta}g_s(z)\widetilde{N}(\dif s,\dif z),\\
f_t^0&= u\overline{V}(x)+\int_0^tf^{00}_s\dif s +\int^t_0f^{0k}_s\dif W_s^k+\int_0^t\!\! \int_{|z|\leq \delta}g^0_s(z)
\widetilde{N}(\dif s\dif z).
\end{split}
\end{align}
Let $\Gamma_t:=uK_t (\nabla_z  G_V)(X_t,0)$. By (iii) of Lemma \ref{Le22}, we have
$$
\Gamma_t=uK_t[\widetilde A, V](X_t).
$$ 
Since $V\in C^\infty_p$, by Lemma \ref{Le22}, there exist an $m\in \mN_0$ and $C\geq 1$ independent of $u,x\in\mR^d$ 
such that for all $t\in[0,1]$, $|z|\leq 1$ and $k=0,\cdots,d$,
\begin{eqnarray}
\label{ppp-1}
\begin{split}
  & |f_t|+|f_t^k|+|f_t^{0k}|+\|\Gamma_t\|_{HS}\leq C|uK_t|(1+|X_t|^m),
\\ &   |g_t(z)|+|g_t^0(z)|\leq C|uK_t|(1+|X_t|^m)|z|,
\\ &   |\widetilde g_t(z)|:=|g_t(z)-\Gamma_tz|\leq C|uK_t|(1+|X_t|^m)|z|^2,
\end{split}
\end{eqnarray}
and
\begin{eqnarray}
\label{4-1}
\begin{split}
|h_t | & \leq  |uK_t | \int_{|z|\leq \delta}|H^\delta_V(X_t,z)|\nu(\dif z)
\\ &\leq C |uK_t | \int_{|z|\leq \delta} (1+|X_t|^m)|z|^2\nu(\dif z)
\\ &\leq  C  |uK_t |(1+|X_t|^m) \delta^{2-\alpha} .
\end{split}
\end{eqnarray}
 For $\gamma:=\frac{\alpha\wedge(2-\alpha)}{4}$ and $m$ being as above, define a stopping time
\begin{eqnarray*}
  \tau:=\Big\{s>0: |uK_s|^2\vee |X_s|^{2m}>\delta^{-\gamma/2}\Big \}.
\end{eqnarray*}
By (\ref{ppp-1}) and (\ref{4-1}),   there exists $\kappa_0>0$ such that  for all $t\in [0,\tau)$,  $|z|\leq 1$ and  $k=0,\cdots,d$
\begin{eqnarray}
  \nonumber & |f_t|^2+|f_t^k|^2+|f_t^{0k}|^2+|\Gamma_t|^2\leq \kappa_0 \delta^{-\gamma}, \quad &  |g_t(z)|^2+|g_t^0(t)|^2\leq \kappa_0 \delta^{-\gamma}|z|^2,
  \\ \label{2-1} &  |\widetilde g_t(z)|^2  \leq  \kappa_0 \delta^{-\gamma}|z|^4, & |h_t|^2 \leq \kappa_0\delta^{4-2\alpha -\gamma}.
\end{eqnarray}
Moreover, by \eqref{IY} we also have
\begin{align*}
 f_{t\wedge\tau}&=uV(x)+\int_0^{t\wedge\tau} (f_s^0+h_s) \dif s+\int_0^{t} 1_{s<\tau}f_s^k\dif W_s^k 
 +\int_0^{t}\!\!\int_{|z|\leq \delta}1_{s<\tau}g_s(z)\widetilde{N}(\dif s,\dif z),\\
f_{t\wedge\tau}^0&= u\overline{V}(x)+\int_0^{t}1_{s<\tau}f^{00}_s\dif s +\int^{t}_01_{s<\tau}f^{0k}_s\dif W_s^k
+\int_0^{t}\!\! \int_{|z|\leq \delta}1_{s<\tau}g^0_s(z)\widetilde{N}(\dif s\dif z).
\end{align*}
Fix $t\in(0,1)$. By \eqref{AA-10} with  $\kappa=\kappa_0\delta^{-\gamma}$, 
there are constant $c_0=c_0(d,\alpha)\in(0,1)$ and random variable $\zeta_1>0$ with $\mE\zeta_1\leq 1$ such that
\begin{eqnarray}
 \label{3-4}
c_0\int_0^{t\wedge\tau}(|f_s^k|^2+\|\Gamma\|_{HS}^2)\dif s
\leq \delta^{2\alpha-6}\int_0^t|f_{s\wedge\tau}|^2\dif s + \kappa_0 \delta^{\alpha-\gamma}\log \zeta_1+\kappa_0 \delta^{2-\gamma}.
 \end{eqnarray}
By  (\ref{Eso2})  with $\eps=\delta^4,$ $\kappa=\kappa_0\delta^{-\gamma}$ and $\beta=4-2\alpha$, 
 there are constant $c_1=c_1(d,\alpha)>0$ and  random variable $\zeta_2>0$ with $\mE\zeta_2\leq 1$ such that
 \begin{eqnarray}
  \label{3-3}
c_1 \int_0^{t\wedge \tau} | f_s^0|^2\dif s \leq 
2\delta^{-6}\int_0^t |f_{s\wedge\tau}|^2\dif s +\kappa_0 \delta^{\frac{1}{2}-\gamma} \log \zeta_2
  + \kappa_0\delta^{-\gamma}(2\delta^2 +t\delta^{\frac{1}{2}\wedge (4-2\alpha)}).
 \end{eqnarray}
Combining \eqref{3-4}, \eqref{3-3} with definition \eqref{Def0}, one finds that for some $c_2=c_2(\kappa_0,d,\alpha)>0$,
 $$
c_2 \sW_t^\delta(u,x;0)\leq \delta^{-6} \sH_t^\delta(u,x;0)+\tfrac{1}{2}\log(\zeta_1^{\delta^{\alpha-\gamma}}\zeta^{\delta^{\frac{1}{2}-\gamma}}_2)+\delta^{2-\gamma}+t\delta^{\frac{1}{2}\wedge (4-2\alpha)-\gamma},\ t<\tau.
 $$
Multiplying both sides by $\delta^{-\gamma}$ and taking exponential, we obtain
 $$
{\bf 1}_{\{t<\tau\}}\exp\left\{c_2 \delta^{-\gamma}\sW_t^\delta(u,x;0)-\delta^{-6-\gamma} \sH_t^\delta(u,x;0)\right\}\leq (\zeta_1^{\delta^{\alpha-2\gamma}}\zeta^{\delta^{\frac{1}{2}-2\gamma}}_2)^{\frac{1}{2}}\e^{\delta^{2-2\gamma}+t\delta^{\frac{1}{2}\wedge (4-2\alpha)-2\gamma}}.
 $$
Recalling $\gamma=\frac{\alpha\wedge(2-\alpha)}{4}$, taking expectations and by $\mE\zeta_1\leq 1$ and $\mE\zeta_2\leq 1$,  we derive 
 \begin{eqnarray}
\label{3-1}
\sup_{u,x\in \mR^d}\mE\Big({\bf 1}_{\{t<\tau\}} \exp\big\{c_2\delta^{-\gamma}\sW_t^\delta(u,x;0)-\delta^{-7}\sH_t^\delta(u,x;0)\big\}\Big) 
\leq \e^{\delta^\alpha+t}, \ \forall\delta,t\in(0,1).
\end{eqnarray}

 {\bf (3)} Let $m$ be as in \eqref{ppp-1}. We introduce the following random set for later use:
  \begin{eqnarray*}
 \Omega^\delta_t(u,x;\hbar):=\left\{\sup_{s\in[0,t]}\Big(|uK_s^\delta(x;\hbar)|^2\vee |X_s^\delta(x;\hbar)|^{2m}\Big)\leq \delta^{-\gamma/2}\right\}.
  \end{eqnarray*}
We use \eqref{3-1} to show that there is a $C\geq 1$ such that for all $t,\delta\in(0,1)$ and $u,x\in\mR^d$,
  \begin{eqnarray}
   \label{111}
\mathbf{II}:=\mE \Big({\bf 1}_{\Omega_t^\delta(u,x;\hat L^\delta)}\cdot\exp\big\{c_2\delta^{-\gamma}\sW_t(u,x)-\delta^{-7}\sH_t(u,x)\big\}\Big) \leq C.
\end{eqnarray}
Let
$$ 
 \cJ_t^u(x;\hbar):={\bf 1}_{\Omega_t^\delta(u,x;\hbar)}\cdot\exp\Big\{c_2\delta^{-\gamma}\sW_t(u,x;\hbar)-\delta^{-7}\sH_t(u,x;\hbar)\Big\}.
 $$
 Since $ \Omega^\delta_t(u,x;0)\subset\{t<\tau\}$, by (\ref{3-1}) we have
 \begin{eqnarray}\label{10}
\sup_{u,x\in \mR^d}\mE \cJ_t^u(x;0)\leq \e^{\delta^\alpha+ t}.
 \end{eqnarray}
 Let $0=t_0<t_1<\cdots<t_n\leq t_{n+1}=t$ be the jump times of $\hbar$ before time $t$.
 For $j=0,1,\cdots,n$ and $s\in[0,t_{j+1}-t_j)$, noting that
$$  
X_{s+t_j}^\delta(x;\hbar)= X_{s}^\delta(X_{t_j}^\delta(x;\hbar);0) 
\Rightarrow K_{s+t_j}^\delta(x;\hbar)=K_{t_j}^\delta(x;\hbar)K_s^\delta(X_{t_j}^\delta(x;\hbar);0),
$$
we have
\begin{eqnarray*}
  \Omega_{t_{j+1}}(u,x;\hbar)= \Omega_{t_{j}}(u,x;\hbar)\cap \Omega_{t_{j+1}-t_j}\Big(uK_{t_j}^\delta(x;\hbar),X_{t_j}^\delta(x;\hbar);0\Big).
\end{eqnarray*}
Thus, by the Markov property and (\ref{10}), we have for all $u,x\in \mR^d,$
\begin{align*}
  \mE \cJ_{t_{n+1}}^u(x;\hbar)&=\mE \Big( \cJ_{t_{n}}^u(x;\hbar)\cdot \mE  \cJ_{t_{n+1}-t_n}^{u_n}(x_n;0)|_{u_n=uK_{t_j}^\delta(x;\hbar),x_n=X_{t_n}^\delta(x;\hbar)}\Big)
  \\ &\leq \e^{\delta^\alpha+t_{n+1}-t_n}  \mE  \cJ_{t_{n}}^u(x;\hbar)\leq\cdots\leq \e^{ n\delta^\alpha+ t}.
\end{align*}
Now we can give a proof of  (\ref{111}).
Let
\begin{eqnarray*}
 N_t^\delta:=\sum_{s\in [0,t]}{\bf 1}_{|\Delta \hat L_s|>\delta}, \quad  \lambda_\delta:=\int_{\delta<|z|<1}\dif z/|z|^{d+\alpha}.
\end{eqnarray*}
Then $N^\delta$ is a Poisson process with intensity $\lambda_\delta$.
By the independence of $L^\delta$ and $\hat{L}^\delta$, we have
  \begin{align*}
\mathbf{II}&\stackrel{\eqref{Def1}}{=}\mE \cJ_t^u(x;\hat{L}^\delta)
 =\sum_{n=0}^\infty \mE \big(\mE \cJ_t^u(x;\hbar)|_{h=\hat{L}^\delta};N_t^\delta=n\big)
    \\&\leq \sum_{n=0}^\infty \e^{  n\delta^{\alpha } +t} \mP(N_t^\delta=n)
=\e^{t}\sum_{n=0}^\infty  \e^{ n\delta^{\alpha }}  \frac{(t\lambda_\delta)^n}{n!}\e^{-t\lambda_\delta}
    \\ &=\e^t\exp{\{ ( \e^{ \delta^{\alpha }}-1) \lambda_\delta t\}}
\leq \e^t\exp{\{ 3 \delta^{\alpha} \lambda_\delta \}}\leq C,
  \end{align*}
where we have used that $\e^s-1\leq 3s $ for $s\in (0,1)$ and  $\lambda_\delta\leq c\delta^{-\alpha}$.

\medskip

{\bf (4)} For any  $p>1,$ by Chebyshev's inequality we have
\begin{align}\label{11}
\begin{split}
\mP((\Omega^\delta_t(u,x;\hat{L}^\delta))^c)
&=\mP\left(\sup_{s\in[0,t]}\Big(|uK_s(x)|^2\vee |X_s(x)|^{2m}\Big)>\delta^{-\gamma/2} \right) \\ 
& \leq \delta^p \mE\left[\sup_{s\in[0,t]}\Big(|uK_s(x)|^2\vee |X_s(x)|^{2m}\Big)^{2p/\gamma}\right] \\ 
&\leq (C_p(x)+C|u|^{\frac{4p}{\gamma}}) \delta^p.
\end{split}
\end{align}
Therefore, for all $t\in (0,1),x\in \mR^d, |u|=1$ and $p\geq 1$, by \eqref{EH2}, \eqref{11} and \eqref{111}, we have
\begin{eqnarray*}
&& \mP\left\{\sH_t(u,x)\leq \delta^7 t, \sW_t(u,x)\geq \delta^{\gamma/2}t\right\} \leq \mP\big((\Omega^\delta_t(u,x;\hat{L}^\delta))^c\big)\\
&&+ \mP\left\{c_2\delta^{-\gamma}\sW_t(u,x)-\delta^{-7}\sH_t(u,x)\geq c_2\delta^{-\gamma/2}t-t; \Omega^\delta_t(u,x;\hat{L}^\delta)\right\}
  \\ &&\leq (C_p(x)+C|u|^{\frac{4p}{ \gamma}}) \delta^p+C\e^{t-c_2\delta^{-\gamma/2 }t}.
\end{eqnarray*}
We complete   the proof of  (\ref{12}) by setting $\beta=\frac{\gamma}{2}=\frac{\alpha\wedge(2-\alpha)}{8}.$

\medskip

{\bf (5)}  Under {\bf (H$_1'$)} and $V\in C^\infty_b$,  
the $m$ in Lemma \ref{Le22} and \eqref{ppp-1} can be zero so that  $C_p(x)$ can be independent of $x.$
\end{proof}

By a standard chain argument, we have the following lemma, which is the same in spirit as \cite[Lemma 3.1]{Kuni}.
\bl\label{Le34}
Under {\bf (H$_1$)}, 
for any $n\in\mN$, there is a constant $C_n\geq 1$ such that for all $R\geq 1$,
\begin{align}\label{KP9}
\sup_{|x|<R}\mP\left(\sup_{s\in[0,\eps]}|X_s(x)|\geq C_nR\right)\leq C_n\eps^n,\ \ \forall \eps\in(0,1).
\end{align}
\el
\begin{proof}
Let $\mD$ be the space of all c\'adl\'ag functions from $[0,1]$ to $\mR^d$. Let $\bP_x=\mP\circ X^{-1}_\cdot(x)$ be the law of $X_\cdot(x)$ in $\mD$.
With a little of confusion, let $X$ be the coordinate process over $\mD$ so that $(X,\bP_x)_{x\in\mR^d}$ forms a family of strong Markov processes.
Let $\tau_0=0$ and $R\geq 1$. For $j\in\mN$, define
$$
\tau_j:=\inf\left\{t>\tau_{j-1}: |X_t-X_{\tau_{j-1}}|>R\right\},\ L_t:=\int^t_0\!\!\int_{|z|<1}z\widetilde N(\dif s,\dif z).
$$
Clearly, we have
$$
\sup_{t\in[\tau_{j-1},\tau_j)}|X_t-X_{\tau_{j-1}}|\leq R,\quad X_{\tau_j}=X_{\tau_j-}+g(X_{\tau_j-},\Delta L_{\tau_j}).
$$
Since $|g(x,z)|\leq C(1+|x|)$ for all $x\in\mR^d$ and $|z|<1$, we have
$$
|X_{\tau_j}|\leq |X_{\tau_j-}|+C(1+|X_{\tau_j-}|)\leq C_1R+C_2|X_{\tau_{j-1}}|.
$$
By induction method, one sees that for each $j=1,2,\cdots$,
$$
|X_{\tau_j}|\leq \frac{(C_2^{j}-1)C_1}{C_2-1}R+C_2^j|X_0|\leq C_nR,\ |X_0|\leq R,
$$
and therefore,
\begin{align}\label{GQ1}
\sup_{s\in[0,\tau_n]}|X_s|\leq C_nR,\ |X_0|\leq R,
\end{align}
which implies that
\begin{align}\label{KJ7}
\left\{\sup_{s\in[0,\eps]}|X_s|>C_n R\right\}\subset\Big\{\tau_n<\eps\Big\},\ \ |X_0|\leq R.
\end{align}
Noting that $\tau_n=\tau_{n-1}+\tau_1\circ \theta_{\tau_{n-1}}$, where $\theta_\cdot$ is the usual shift operator,
by the  strong Markov property, we have
\begin{align}
\bP_x\Big\{\tau_n<\eps\Big\}&=\bP_x\Big\{\tau_n<\eps;\tau_{n-1}<\eps\Big\}\leq\bP_x\Big\{\tau_1\circ\theta_{\tau_{n-1}}<\eps;\tau_{n-1}<\eps\Big\}\no\\
&=\bE_x\left(\bP_{X_{\tau_{n-1}}}\Big(\tau_1<\eps\Big);\tau_{n-1}<\eps\right).\label{GQ3}
\end{align}
On the other hand, by BDG's inequality and the linear growth of $b,\sigma$ and $g$, we have
\begin{align*}
\bE_y\left(\sup_{t\in[0,\eps]}|X_t-X_0|^2\right)&\lesssim\bE_y\left(\int^\eps_0|b(X_s)|\dif s\right)^2+\bE_y\left(\int^\eps_0|\sigma(X_s)|^2\dif s\right)\\
&\quad+\bE_y\left(\int^\eps_0\!\!\int_{|z|<1}|g(X_s,z)|^2\nu(\dif z)\dif s\right)\\
&\lesssim\eps\sup_{s\in[0,\eps]}\bE_y(1+|X_s|^2)\lesssim\eps(1+|y|^2).
\end{align*}
Hence,
$$
\bP_y\Big(\tau_1<\eps\Big)\leq \bP_y\left(\sup_{t\in[0,\eps]}|X_t-X_0|>R\right)\leq R^{-2}\bE_y\left(\sup_{t\in[0,\eps]}|X_t-X_0|^2\right)
\leq C R^{-2}(1+|y|^2)\eps.
$$
Thus, by \eqref{GQ1} and \eqref{GQ3}, we get for $|x|<R$,
$$
\bP_x\Big\{\tau_n<\eps\Big\}\leq CR^{-2}(1+C^2_nR^2)\eps\bP_x\Big\{\tau_{n-1}<\eps\Big\}\leq\cdots\leq C_n\eps^n.
$$
The proof is complete by \eqref{KJ7}.
\end{proof}
\br
Note that \eqref{KP9} can not be obtained by simple applications of Chebyshev's inequality. Intuitively, 
consider the Poisson process
$N_t$ with intensity $\lambda$. For $n\in\mN$, we clearly have
$$
\mP(N_t\geq n)=\e^{-\lambda t}\sum_{k=n}^\infty \frac{(\lambda t)^k}{k!}
=(\lambda t)^n\e^{-\lambda t}\sum_{k=0}^\infty \frac{(\lambda t)^k}{(k+n)!}\leq (\lambda t)^n.
$$
If we let $\tau_n$ be the $n$-th jump time of $N$, then $\{N_t\geq n\}=\{\tau_n\leq t\}$.
\er
We define the reduced Malliavin matrix by
\begin{eqnarray*}
\hat{\Sigma}_t(x):=\int_0^tK_s(x)\big(A A^*+\widetilde A\widetilde A^*\big)(X_s(x))K^*_s(x)\dif s,
  \end{eqnarray*}
  where $A$ and $\widetilde A$ are defined in \eqref{KP5}. 
 Following the proof of \cite[Theorem 3.3]{Zhang17}, we have
\begin{theorem}\label{Th41}
 Under {\bf (H$_1$)}, {\bf (H$_g^{\rm o}$)} and {\bf (H$^{\rm str}_{\bf or}$)}, there exist $\gamma=\gamma(\alpha,j_0) \in (0,1)$ 
 and constants $C_2\geq 1,c_2\in (0,1)$ 
 such that for all $t\in (0,1)$ and $R,\lambda,p\geq 1$,
 \begin{eqnarray*}
 \sup_{|x|<R}  \sup_{|u|=1}\mE\[\exp\{-\lambda u \hat{\Sigma}_t(x) u^*\}\]\leq C_2\exp\{-c_2t\lambda^\gamma\}+C_{R,p}(\lambda t)^{-p},
 \end{eqnarray*}
 where $C_{R,p}>0$ continuously depends on $R,p.$
\end{theorem}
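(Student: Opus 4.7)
My approach follows the Lie-bracket cascade from \cite{Zhang17}. The starting point is the elementary reduction
\begin{equation*}
\mE\bigl[\e^{-\lambda u\hat\Sigma_t(x)u^*}\bigr]\le\e^{-\lambda\epsilon}+\mP\bigl(u\hat\Sigma_t(x)u^*\le\epsilon\bigr),\qquad \epsilon>0,
\end{equation*}
which reduces the task to a small-ball estimate for $u\hat\Sigma_t(x)u^*=\sum_{V\in\sV_0}\int_0^t|uK_s(x)V(X_s(x))|^2\,\dif s$; the free parameter $\epsilon=\epsilon(\lambda,t)$ will be tuned at the end.

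To estimate $\mP(u\hat\Sigma_t u^*\le\delta_0^7 t)$, I iterate Lemma \ref{p-6} along the Lie-bracket tower $\sV_0,\sV_1,\ldots,\sV_{j_0}$. That lemma asserts that if $\int_0^t|uK_s V(X_s)|^2\dif s\le\delta^7 t$ for a vector field $V$, then outside an event of probability at most $C_p(R)\delta^p+C\e^{-c\delta^{-\beta}t}$ the analogous integrals against $[A_k,V]$, $[\widetilde A_k,V]$ and $\overline V=[A_0,V]+\tfrac12[A_j,[A_j,V]]$ are all bounded by $\delta^\beta t$. Setting $\delta_{j+1}:=\delta_j^{\beta/7}$ so that the output $\delta_j^\beta t$ matches the next input $\delta_{j+1}^7 t$, and using $[A_0,V]=\overline V-\tfrac12[A_k,[A_k,V]]$ to recover the pure drift bracket after one extra iteration, I apply the lemma inductively with a union bound over the finite family $\cup_{j=0}^{j_0}\sV_j$. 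On the resulting good event, of probability at least $1-\sum_{j=0}^{j_0}|\sV_j|\bigl(C_p(R)\delta_j^p+C\e^{-c\delta_j^{-\beta}t}\bigr)$, the hypothesis propagates to
\begin{equation*}
\sum_{j=0}^{j_0}\sum_{V\in\sV_j}\int_0^t|uK_s(x)V(X_s(x))|^2\,\dif s\le N_\star\,\delta_{j_0}^{7}\,t,\qquad \delta_{j_0}=\delta_0^{(\beta/7)^{j_0}}.
\end{equation*}

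To close the loop, the strong H\"ormander hypothesis {\bf (H$^{\rm str}_{\bf or}$)}, together with continuity of the fields and compactness on the ball $\{|y|\le C_{j_0}R\}$ (in which $X_s(x)$ remains throughout $[0,t]$ with probability $1-C_{j_0}t^n$ by Lemma \ref{Le34}), gives a uniform lower bound $\sum_{j,V}|uV(y)|^2\ge c_0(R)>0$ for $|u|=1$. Combined with $K_s(x)\to\mI$ and $X_s(x)\to x$ as $s\downarrow 0$ (Lemma \ref{15}), a further high-probability event guarantees $\sum_{j,V}\int_0^t|uK_s V(X_s)|^2\dif s\ge\tfrac12 c_0(R)t$. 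Comparing this with the cascade upper bound forces $N_\star\delta_{j_0}^7\ge\tfrac12 c_0(R)$, so for $\delta_0$ smaller than an absolute threshold $\delta^\star(R)$ the small-ball event $\{u\hat\Sigma_t u^*\le\delta_0^7 t\}$ must be contained in the exceptional set of the cascade, yielding
\begin{equation*}
\mP(u\hat\Sigma_t u^*\le\delta_0^7 t)\le C_p(R)\delta_0^{p(\beta/7)^{j_0}}+C\e^{-c\delta_0^{-\beta_\star}t},\qquad \beta_\star:=\beta(\beta/7)^{j_0},
\end{equation*}
where the polynomial location error from Lemma \ref{Le34} has been absorbed by enlarging $p$.

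The final step is to choose $\delta_0=\lambda^{-1/(7+\beta_\star)}$, which balances $\lambda\delta_0^7 t$ against $\delta_0^{-\beta_\star}t$ so that both exponentials collapse into $\e^{-c_2 t\lambda^\gamma}$ with $\gamma:=\beta_\star/(7+\beta_\star)\in(0,1)$. Since $p$ in Lemma \ref{p-6} is arbitrary, the residual polynomial becomes $\lambda^{-p(\beta/7)^{j_0}/(7+\beta_\star)}$, which for $p$ large and $t\in(0,1)$ is $\le C_{R,p}\lambda^{-p}\le C_{R,p}(\lambda t)^{-p}$. The main obstacle I foresee is the exponential-in-$j_0$ deterioration of the effective exponent $\beta_\star$, which forces $\delta_0$ to be extremely small and demands careful bookkeeping of the $R$-dependent constants $C_p(x)$ appearing at every level of the induction; this is precisely where Lemma \ref{Le34} is essential, as it truncates the trajectories of $X_s(x)$ to a compact ball and thereby converts the pointwise span condition in {\bf (H$^{\rm str}_{\bf or}$)} into the uniform-in-$|x|<R$ quadratic-form lower bound used in the contradiction argument.
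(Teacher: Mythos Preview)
Your overall strategy—reduce to a small-ball estimate, cascade Lemma~\ref{p-6} through the bracket hierarchy, close with the H\"ormander lower bound, then tune $\delta_0$ against $\lambda$—matches the paper's proof. There is, however, a genuine gap in the contradiction step. You assert a ``further high-probability event'' on which $\sum_{j,V}\int_0^t|uK_s V(X_s)|^2\dif s\ge\tfrac12 c_0(R)t$, invoking $K_s\to\mI$ and Lemma~\ref{Le34} on $[0,t]$. Neither ingredient furnishes probability $1-o_{\delta_0\to0}(1)$: the event $\{\sup_{s\le t}\|J_s\|\le M\}$ (needed so that $|uK_s|\ge M^{-1}$) has probability bounded away from $1$ by a constant independent of $\delta_0$ once $M$ is fixed, and Lemma~\ref{Le34} applied with $\varepsilon=t$ yields an error $C_n t^n$ that likewise carries no $\delta_0$-dependence. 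Such residual constants cannot be ``absorbed by enlarging $p$'' and would survive as a $\lambda$-independent term in the final bound, destroying the estimate for large $\lambda$.

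The paper repairs this by making both thresholds $\delta_0$-dependent. It introduces $\tau_2:=\inf\{s:\|J_s\|\ge\delta_{j_0+1}^{-b}\}$, so that on $\{\tau_2>t\}$ one has $|uK_s|^2\ge\delta_{j_0+1}^{2b}$ while $\mP(\tau_2\le t)\le C_p\delta_{j_0+1}^{bp}$ by Chebyshev and Lemma~\ref{15}; and it applies Lemma~\ref{Le34} only on the short window $[0,t\delta_{j_0+1}^{a}]$, so the confinement error becomes $C_n(t\delta_{j_0+1}^{a})^n$, which for $n$ large is $\le\delta_{j_0+1}^p$. The resulting lower bound is only $c_0(R)\,t\,\delta_{j_0+1}^{a+2b}$, but choosing $a+2b<7$ still contradicts the cascade upper bound $N_\star\delta_{j_0+1}^7 t$, and now every exceptional probability is polynomial in $\delta_0$. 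A secondary bookkeeping point: the paper iterates along the modified hierarchy $\sU_j$ built with $\overline V$ in place of $[A_0,V]$, so that Lemma~\ref{p-6} matches level-to-level exactly; your ``one extra iteration'' device is equivalent in spirit, but it forces the cascade to run past depth $j_0$, which is why $\delta_{j_0+1}$ rather than $\delta_{j_0}$ governs the final comparison.
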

\begin{proof}
Let $A_0,A_k$ be as in \eqref{VEC} and $\widetilde A_k:=\p_{z_k}g^i(\cdot,0)\p_i$. Define
$$
\sU_0:=\{A_k,\widetilde A_k, k=1,\cdots,d\},
$$ 
and for $j=1,2,\cdots,$
$$
   \sU_j:=\Big\{[A_k,V], [\widetilde A_k,V],[A_0,V]+\tfrac{1}{2}[A_i,[A_i,V]]:   V\in\sU_{j-1},k=1,\cdots,d\Big\}.
$$
Recall the definition of $\sV_j$ in \eqref{VJ}. It is easy to see that 
\begin{align}\label{KP7}
{\rm span}\{\cup_{j=0}^{j_0}\sV_j\}\subset{\rm span}\{\cup_{j=0}^{j_0+1}\sU_j\}.
\end{align}
Let $\gamma:=\alpha\wedge(2-\alpha)/56$ and $x,u\in\mR^d$ with $|u|=1$. 
For $j=0,1,\cdots,j_0+1$, define
$$
E_j^x:=\left\{\sum_{V\in\sU_j}\int^t_0|uK_s(x) V(X_s(x))|^2\dif s\leq t\eps^{7\gamma^j }\right\},
$$
Notice that
$$
E_0^x\subset\left(\cap_{j=0}^{j_0+1}E_j^x\right)\cup\left(\cup_{j=0}^{j_0-1}(E_j^x\setminus E_{j+1}^x)\right)
$$
and
$$
E_{j+1}^x=\left\{\sum_{V\in\sU_j}\int^t_0|uK_s(x)([A,V],[\widetilde A,V],\overline V)(X_s(x))|^2\dif s\leq t\eps^{7\gamma^{j+1} }\right\}.
$$
By \eqref{12} with $V\in \sU_j$ and $\delta=\eps^{\gamma^j}$, we have
$$
\mP(E_j^x\setminus E_{j+1}^x)\leq C_p(x)\eps^p+C\exp\{-ct\eps^{-8\gamma^{j+1}}\}.
$$
To estimate $\mP(\cap_{j=0}^{j_0+1}E_j^x)$, note that
\begin{align}
\cap_{j=0}^{j_0+1}E^x_j
&\subset\left\{\sum_{j=0}^{j_0+1}\sum_{V\in\sU_j}\int^t_0|uK_s(x) V(X_s(x))|^2\dif s\leq t\sum_{j=0}^{j_0+1}\eps^{7\gamma^j }\right\}\no\\
&\subset\left\{\sum_{j=0}^{j_0+1}\sum_{V\in\sU_j}\int^t_0|uK_s(x) V(X_s(x))|^2\dif s\leq tj_0\eps^{7\gamma^{j_0+1} }\right\}.\label{KJ2}
\end{align}
By {\bf (H$^{\rm str}_{\bf or}$)} and \eqref{KP7}, for each $x\in\mR^d$, we have
$$
\inf_{|u|=1}\sum_{j=0}^{j_0+1}\sum_{V \in \sU_j}|uV(x)|^2>0,
$$ 
which implies that for any $\kappa\geq 1$, there is a $c_0>0$ such that
\begin{align}\label{KJ11}
\inf_{|x|\leq \kappa}\inf_{|u|=1}\sum_{j=0}^{j_0+1}\sum_{V \in \sU_j}|uV(x)|^2\geq c_0.
\end{align}
Let $C_n$ be as in Lemma \ref{Le34}. Define stopping times
$$
\tau_1^x:=\inf\{t>0: |X_t(x)|\geq C_n R\},\ \tau_2^x:=\inf\{t>0: \|J_t(x)\|_{HS}\geq\eps^{-7\gamma^{j_0+1}/8}\}.
$$
Noticing that for $s<\tau^x_2$,
$$
\eps^{7\gamma^{j_0+1}/8}\leq\|J_s(x)\|_{HS}^{-1}\leq |uK_s(x)|,
$$
 by \eqref{KJ11} with $\kappa=C_n R$, we have
on $\{\tau_1^x>t\eps^{7\gamma^{j_0+1}/4}\}\cap\{\tau_2^x>t\}$,
$$
\sum_{j=0}^{j_0+1}\sum_{V\in\sU_j}\int^t_0|uK_s(x) V(X_s(x))|^2\dif s\geq c_0\int^{t\eps^{7\gamma^{j_0+1}/4}}_0|uK_s(x)|^2\dif s\geq c_0 t\eps^{7\gamma^{j_0+1}/2}.
$$
Thus, by \eqref{KJ2}, we have for $\eps\leq\eps_0$ small enough,
$$
\Big(\cap_{j=0}^{j_0+1}E^x_j\Big)\cap\{\tau_1^x>t\eps^{7\gamma^{j_0+1}/4}\}\cap\{\tau_2^x>t\}=\emptyset.
$$
On the other hand, by Lemma \ref{Le34}, we have
$$
\sup_{|x|<R}\mP\left(\tau_1^x\leq t\eps^{7\gamma^{j_0+1}/4}\right)\leq C_n(t\eps^{7\gamma^{j_0+1}/4})^n,
$$
and by Lemma \ref{15} and Chebyshev's inequality, for any $p\geq 1$,
$$
\sup_{x\in\mR^d}\mP\left(\tau_2^x\leq t\right)\leq C_p(\eps^{7\gamma^{j_0+1}/8})^p.
$$
Combining the above calculations, we obtain
\begin{align}\label{KJ33}
\sup_{|x|<R}\mP(E^x_0)\leq  C_{R,p}\eps^p+C\exp\{-ct\eps^{-8\gamma^{j_0+2}}\}+C_n(t\eps^{7\gamma^{j_0+1}/4})^n.
\end{align}
Noting that
$$
u \hat{\Sigma}_t(x) u^*=\sum_{V\in\sU_0}\int^t_0|uK_s(x) V(X_s(x))|^2\dif s,
$$
by \eqref{KJ33} with $n\geq 4p/(7\gamma^{j_0+1})$, we obtain
$$
\sup_{|x|<R}\sup_{|u|=1}\mP\Big(u \hat{\Sigma}_t(x) u^*\leq t\eps^{7\gamma^j}\Big)\leq  C_{R,p}\eps^p+C\exp\{-ct\eps^{-8\gamma^{j_0+1}}\}.
$$
The desired estimate of the Laplace transform of  $u \hat{\Sigma}_t(x) u^*$ follows from this estimate (see \cite{Zhang17}).
\end{proof}
\br\label{Re54}
Under {\bf (H$'_1$)},  {\bf (H$_g^{\rm o}$)} and {\bf (H$^{\rm uni}_{\bf or}$)}, from the above proofs, one sees that $C_{R,p}$ can be independent of $R$.
\er

\section{Proof of Theorem \ref{Th1}}
For $p,R\in[1,\infty]$ and $\varphi\in C_c^\infty(\mR^d)$, define
$$
\|\varphi\|_{p;R}:=\left(\int_{B_R}|\varphi(x)|^p\dif x\right)^{1/p},\ \cT^0_t \varphi(x):=\mE \varphi(X_t(x)).
$$
We first prepare the following lemma for later use.
\bl\label{Le51}
Under {\bf (H$_1$)} and {\bf (H$_g^{\rm o}$)}, for any $k,m\in\mN_0$ and $R,p\geq1$, there exists a constant $C_R=C(R,p,k)$ such that for all $t\in(0,1)$ and $\varphi\in C^\infty_c(\mR^d)$,
\begin{align}\label{KJ99}
\|\nabla^k\cT^0_t\nabla^m \varphi\|_{p;R}\leq C_R\sum_{j=0}^{k}\|\nabla^{m+j} \varphi\|_{p}.
\end{align}
Under {\bf (H$'_1$)} and {\bf (H$_g^{\rm o}$)}, the above $R$ can be $\infty$ so that the global estimate holds.
\el
\begin{proof}
(i) We first show \eqref{KJ99} for $k=m=0$. By the change of variables, we have
$$
\|\cT^0_t\varphi\|^p_{p;R}=\mE\int_{B_R}|\varphi(X_t(x))|^p\dif x=\int_{\mR^d}|\varphi(y)|^p\,\mE\Big(1_{B_R}(X_t^{-1}(y))\det(\nabla X^{-1}_t(y))\Big)\dif y.
$$
Noticing that
$$
\nabla X^{-1}_t(y)=(\nabla X_t)^{-1}\circ X_t^{-1}(y)=K_t\circ X_t^{-1}(y),
$$
we have
$$
\|\cT^0_t\varphi\|_{p;R}\leq\mE\left(\sup_{|x|<R}\det(K_t(x))\right)\|\varphi\|_p.
$$
On the other hand, from \eqref{SDE0} and \eqref{Q2}, it is by now standard to show that for any $p\geq 2$,
$$
\mE |K_t(x)-K_t(y)|^p\leq C|x-y|^p,\ \ x,y\in\mR^d,
$$
which implies that
$$
\mE |\det K_t(x)-\det K_t(y)|^p\leq C|x-y|^p,\ \ x,y\in\mR^d.
$$
Hence, by Kolmogorov's theorem, 
$$
\mE\left(\sup_{|x|<R}\det K_t(x)\right)\leq C_R.
$$
Thus \eqref{KJ99} holds for $k=m=0$.

(ii) Next for $k\in\mN$ and $m\in\mN_0$, by the chain rule, we have
\begin{align}\label{KJ10}
\nabla^k\cT^0_t\nabla^m \varphi(x)=\nabla^k\mE\Big((\nabla^m\varphi)(X_t(x))\Big)
=\sum_{j=0}^k\mE\Big((\nabla^{m+j}\varphi)(X^x_t)G_j\Big),
\end{align}
where $\{G_j, j=0,\cdots,k\}$ are real polynomial functions of $\nabla X_t(x),\cdots, \nabla^k X_t(x)$. Hence, by H\"older's inequality,
\begin{align}\label{HA1}
\|\nabla^k\cT^0_t\nabla^m \varphi\|_{p;R}^p\leq
\sum_{j=0}^k\left(\int_{B_R}\mE|(\nabla^{m+j}\varphi)(X_t(x))|^p\dif x\right)\sup_{x\in B_R}\left(\mE|G_j|^{p/(p-1)}\right)^{p-1}.
\end{align}
By \eqref{Q22}, it is now standard to show that for any $q\geq 1$ and $\ell\in\mN$,
\begin{align}\label{HA2}
\sup_{x\in \mR^d}\mE|\nabla^\ell X^x_t|^q<\infty.
\end{align}
Estimate \eqref{KJ99} follows by \eqref{HA1}, \eqref{HA2} and (i).

(iii) Under {\bf (H$'_1$)} and {\bf (H$_g^{\rm o}$)}, by \cite[Lemma 4.4]{Zhang17}, we have $\|\cT^0_t\varphi\|_p\leq C\|\varphi\|_p$, which together with 
\eqref{HA1} and \eqref{HA2} implies \eqref{KJ99} with $R=\infty$.
\end{proof}

Now we use the Malliavin calculus introduced in Section 2 to show the following main result (see also \cite{Zhang17}), which will automatically produce the conclusions 
in Theorem \ref{Th1} by Sobolev's embedding theorem.
\bt\label{Th45}
Fix $\ell\geq 2$. Under {\bf (H$_\ell$)}, {\bf (H$_g^{\rm o}$)} and {\bf (H$^{\rm str}_{\bf or}$)}, 
for any $k,m\in\mN_0$ with $k+m=\ell-1$, there exists a $\gamma_{km}>0$ such that for all $R\geq 1$, $t\in(0,1)$, $p\in(1,\infty]$
and $\varphi\in C^\infty_c(\mR^d)$,
\begin{align}\label{KJ9}
\|\nabla^k\cT^0_t\nabla^m \varphi\|_{p;R}\leq C_{R,p} t^{-\gamma_{km}}\|\varphi\|_{p}.
\end{align}
Moreover, under {\bf (H$'_\ell$)}, {\bf (H$_g^{\rm o}$)} and {\bf (H$^{\rm uni}_{\bf or}$)},  one can take $R=\infty$ in \eqref{KJ9}.
\et

First of all, by the chain rule and Proposition \ref{Pr1}, we have the following Malliavin differentiability of $X_t$ in the sense of Theorem \ref{Th21}.
Since the proof is completely the same as in \cite{Zhang17}, we omit the details.
\bl
Let $\Theta=(h,\v)\in\mH_{\infty-}\times \mV_{\infty-}$. Under {\bf (H$_1$)}, for any $t\in[0,1]$, $X_t\in\mW^{1,\infty-}_\Theta(\Omega)$  and
\begin{align}\label{SDE7}
\begin{split}
D_\Theta X_t&=\int^t_0\nabla b(X_s)D_\Theta X_s\dif s+\int^t_0\nabla \sigma_k(X_s)D_\Theta X_s\dif W^k_s+\int^t_0\sigma_k(X_s) \dot{h}^k_s\dif s\\
&+\int^t_0\!\!\!\int_{|z|<1}\nabla_x g(X_{s-},z)D_\Theta X_{s-}\widetilde N(\dif s,\dif z)
+\int^t_0\!\!\!\int_{|z|<1}\nabla_\v g(X_{s-},z) N(\dif s,\dif z),
\end{split}
\end{align}
where $\nabla_\v g(x,z):=\p_{z_i}g(x,z)\v_i(s,z)$.
Moreover, for any $R,p\geq 2$, we have
\begin{align}
\sup_{|x|<R}\mE\left(\sup_{t\in[0,1]}|D_\Theta X_t(x)|^p\right)<\infty.\label{HH2}
\end{align}
Under {\bf (H$'_1$)}, the above estimate holds for $R=\infty$.
\el

To use the integration by parts formula in Section 2, we need to introduce suitable Malliavin matrix.
Let $J_t=J_t(x)=\nabla X_t(x)$ be the Jacobian matrix of $x\mapsto X_t(x)$, and $K_t(x)$ be the inverse of $J_t(x)$. 
Recalling (\ref{Q22}) and (\ref{SDE7}), by the formula of constant variation, we have
\begin{align}
D_\Theta X_t=J_t\int^t_0K_s\sigma_k(X_s)\dot{h}^k_s \dif s+
J_t\int^t_0\!\!\!\int_{|z|<1}K_s\nabla_\v g(X_{s-},z) N(\dif s,\dif z).\label{EQ1}
\end{align}
Here the integral is the Lebesgue-Stieltjes integral. 

Next we want to choose special directions $\Theta_j\in\mH_{\infty-}\times\mV_{\infty-}, j=1,\cdots,d$ so that
the Malliavin matrix $\cM^{ij}_t(x):=(D_{\Theta_j}X^i_t(x))_{i,j=1,\cdots,d}$ is invertible. Let
$$
H(x;t):=\int^t_0\sigma^*(X_s(x))K^*_s(x)\dif s,
$$
where the asterisk stands for the transpose of a matrix, and
$$
U(x,z):=(\mI+\nabla_xg(x,z))^{-1}\nabla_zg(x,z),\ \ x\in\mR^d,\ \ |z|<1.
$$
The following lemma is a direction consequence of the above definition and {\bf (H$_\ell$)}.
\bl\label{Le53}
Under {\bf (H$_\ell$)}, for any $k\in\mN_0$ and $m=0,\cdots,\ell-1$, there exists $C>0$ such that for all $x\in\mR^d$ and $0<|z|<1$,
\begin{align}
|\nabla^k_x\nabla^m_zU(x,z)|\leq C(1+|x|)|z|^{-m},\ \ |U(x,z)-U(x,0)|\leq C(1+|x|)|z|^\beta,\label{TYR1}
\end{align}
where $\beta$ is the same as in {\bf (H$_\ell$)}.
\el

Below we fix $\ell\geq 2$ and assume {\bf (H$_\ell$)}$+${\bf (H$_g^{\rm o}$)}. For $j=1,\cdots,d$, define
$$
h_j(x;t):=H(x;t)_{\cdot j},\ \ \v_j(x;s,z):=[K_{s-}(x)U(X_{s-}(x),z)]^*_{\cdot j}\zeta_{\ell,\delta}(z),
$$
where $\zeta_{\ell,\delta}(z)$ is a nonnegative smooth function with
$$
\zeta_{\ell,\delta}(z)=|z|^{1+\ell},\ \ |z|\leq \delta/4,\ \ \zeta_{\ell,\delta}(z)=0,\ \ |z|>\delta/2.
$$
Let 
$$
\Theta_j(x):=(h_j(x), \v_j(x)).
$$
Noticing that by equation (\ref{Q2}),
$$
K_s=K_{s-}(\mI+\nabla_xg(X_{s-},\Delta L_s))^{-1},
$$
by (\ref{EQ1}) we have
\begin{align}
\cM^{ij}_t(x):=D_{\Theta_j} X^i_t(x)=(J_t(x)\Sigma_t(x))_{ij},\label{Var}
\end{align}
where $\Sigma_t(x)=\Sigma^{(1)}_t(x)+\Sigma^{(2)}_t(x)$, and
\begin{align}\label{Var1}
\begin{split}
\Sigma^{(1)}_t(x)&:=\int^t_0 K_s(x)(\sigma\sigma^*)(X_s(x))K^*_s(x)\dif s,\\
\Sigma^{(2)}_t(x)&:=\int^t_0\!\!\!\int_{|z|<1}K_{s-}(x)(UU^*)(X_{s-}(x),z)K^*_{s-}(x)\zeta(z)N(\dif s,\dif z).
\end{split}
\end{align}

By Lemma \ref{Le53} and cumbersome calculations (see \cite{Zhang17}), we have
\bl\label{Le43}
\begin{enumerate}[(i)]
\item For each $j=1,\cdots, d$ and $x\in\mR^d$, $\Theta_j(x)\in\mH_{\infty-}\times\mV_{\infty-}$.

\item For any $R,p\geq 1$, $k\in\mN_0$ and $m=0,\cdots,\ell-1$, we have
\begin{align}
&\sup_{|x|<R}\mE\left(\sup_{t\in[0,1]}\Big(|D^m_\Theta\nabla^k X_t(x)|^p+|D^m_\Theta\cM_t(x)|^p\Big)\right)<\infty,\label{TR4}\\
&\sup_{|x|<R}\mE\left(\sup_{t\in[0,1]}|D^{m}_\Theta\div(\Theta_i(x))|^p\right)<\infty,\ i=1,\cdots,d,\label{TR5}
\end{align}
where $D_\Theta:=(D_{\Theta_1},\cdots, D_{\Theta_d})$.
\item Under {\bf (H$'_\ell$)}$+${\bf (H$^{\rm o}_g$)}, the $R$ in \eqref{TR4}-\eqref{TR5} can be infinity.
\end{enumerate}
\el

Now we can give

\begin{proof}[Proof of Theorem \ref{Th45}]
We  divide the proof into three steps. Below we fix $\ell\geq 2$ and $k,m\in\mN_0$ so that $k+m=\ell-1$.

{\bf (1)} Let $\Sigma_t(x)=\Sigma^{(1)}_t(x)+\Sigma^{(2)}_t(x)$ be defined by (\ref{Var1}). In view of $U(x,0)=\nabla_zg(x,0)$, by  \eqref{TYR1},
Lemma \ref{Le25} and Theorem \ref{Th41}, there are constants $C_3\geq 1$, $c_3,\theta\in(0,1)$ and
$\gamma=\gamma(\alpha,j_0)\in(0,1)$ such that for all $t\in(0,1)$ and $R,\lambda,p\geq 1$,
\begin{align}
\sup_{|x|<R}\sup_{|u|=1}\mE\exp\left\{-\lambda u\Sigma_t(x)u^*\right\}\leq C_3\exp\{- c_3 t\lambda^\gamma\}+C_{R,p}(\lambda^\theta t)^{-p}.\label{EE88}
\end{align}
As in \cite[Lemma 5.3]{Zhang16}, for any $R,p\geq 1$, there exist constants $C_{R,p}\geq 1$ and $\gamma'=\gamma'(\alpha,j_0,d)>0$ 
such that for all $t\in(0,1)$,
\begin{align}\label{TR44}
\sup_{|x|<R}\mE \Big((\det\Sigma_t(x))^{-p}\Big)\leq C_{R,p}t^{-\gamma' p}.
\end{align}
Since $\cM^{-1}_t(x)=\Sigma^{-1}_t(x) K_t(x)$, by Lemma \ref{15}, \eqref{TR4} and \eqref{TR44}, we obtain that for all $p\geq 1$,
\begin{align}
\sup_{|x|<R}\|\cM^{-1}_t(x)\|_{L^p(\Omega)}\leq C_{R,p}t^{-\gamma'}, \ t\in(0,1).\label{EK9}
\end{align}
Under {\bf (H$'_\ell$)}$+${\bf (H$_g^{\rm o}$)}$+${\bf (H$^{\rm uni}_{\bf or}$)}, by Remark \ref{Re54} and (iii) of Lemma \ref{Le43}, 
the above $R$ can be infinity.

{\bf (2)} For $t\in(0,1)$ and $x\in\mR^d$, let $\sC^\ell_t(x)$ be the class of all polynomial functionals of 
$$
(D^m_\Theta\div\Theta)_{m=0}^{\ell-2}, \cM^{-1}_t,(D^m_\Theta\nabla^k X_t)_{k\in\mN,m=0,\cdots,\ell}, 
\big(D^m_\Theta\cM_t\big)^{\ell-1}_{m=1}, 
$$
where the starting point $x$ is dropped in the above random variables. 
By (\ref{EK9}) and Lemma \ref{Le43}, for any $H_t(x)\in \sC_t(x)$, there exists a $\gamma(H)>0$ only depending on the degree of $\cM^{-1}_t$ 
appearing in $H$ and $\alpha,j_0,d$ such that for all $t\in(0,1)$ and $p\geq 1$,
\begin{align}
\sup_{|x|<R}\|H_t(x)\|_{L^p(\Omega)}\leq C_{R,p} t^{-\gamma(H)}.\label{HHJ}
\end{align}
Under {\bf (H$'_\ell$)}$+${\bf (H$_g^{\rm o}$)}$+${\bf (H$^{\rm uni}_{\bf or}$)}, by \eqref{EK9} and (iii) of Lemma \ref{Le43}, the above $R$ can be infinity.

{\bf (3)} Since the Malliavin matrix $\cM_t=D_\Theta X_t$ is invertible,  by the chain rule \eqref{Chain}, we have
$$
D_\Theta\Big(\varphi(X_t)\Big)\cdot \cM_t^{-1}=(\nabla \varphi)(X_t).
$$
For any $Z\in\sC_t(x)$, by (\ref{Var}) and the integration by parts formula (\ref{ER88}), we have
\begin{align*}
\mE\Big((\nabla \varphi)(X_t)Z\Big)
&=\mE\Big(D_\Theta (\varphi(X_t))\cdot\cM_t^{-1}Z\Big)=\mE\Big(\varphi(X_t)Z'\Big),
\end{align*}
where 
$$
Z':=\div\Theta\cdot\cM_t^{-1}Z-D_{\Theta}(\cM_t^{-1}Z)\in\sC_t(x).
$$ 
Starting from this formula, by \eqref{KJ10} and induction, there exists $H\in\sC_t(x)$ such that
\begin{align*}
\nabla^k\mE\Big((\nabla^m\varphi)(X_t)\Big)=\mE \Big(\varphi(X_t)H\Big).
\end{align*}
Therefore,  for any $p\in(1,\infty)$, by (\ref{HHJ}), (\ref{KJ99}) and H\"older's inequality, we have
\begin{align*}
\|\nabla^k\cT^0_t\nabla^{m}\varphi\|_{p; R}&\leq \left(\int_{B_R}\left|\mE \Big(\varphi(X_t(x))H(x)\Big)\right|^p\dif x\right)^{\frac{1}{p}}\\
&\leq \left(\int_{B_R}\mE \Big(|\varphi|^p(X_t(x))\Big)\Big(\mE|H(x)|^{\frac{p}{p-1}}\Big)^{p-1}\dif x\right)^{\frac{1}{p}}\\
&\leq C_{R,p}t^{-\gamma(H)}\|\varphi\|_p,\ \ t\in(0,1).
\end{align*}
Under {\bf (H$'_\ell$)}$+${\bf (H$_g^{\rm o}$)}$+${\bf (H$^{\rm uni}_{\bf or}$)}, by \eqref{HHJ} and Lemma \ref{Le51}, the above $R$ can be infinity.
\end{proof}

\section{Proof of Theorem \ref{Th2}}

Throughout this section we assume {\bf (H$'_1$)}, {\bf (H$^{\rm uni}_{\bf or}$)} and $g\in C^\infty_b(\mR^d\times B_1^c)$.
Let $\chi:[0,\infty)\to[0,1]$ be a smooth function with $\chi(r)=1$ for $r<1$ and $\chi(r)=0$ for $r>2$.
For $\delta>0$, define
$$
\chi_\delta(r):=\chi(r/\delta),\ \ g_\delta(x,z):=g(x,z)\chi_\delta(z).
$$
Choose  $\delta$ be small enough so that $g_\delta$ satisfies {\bf (H$^{\rm o}_g$)}. Thus we can write
$$
\sA \varphi=\cL_0 \varphi+\sL \varphi,
$$
where
$$
\cL_0\varphi(x):=\frac{1}{2}A^2_k \varphi(x)+A_0\varphi(x)+{\rm p.v.}\int_{\mR^d_0}\Big(\varphi(x+g_\delta(x,z))-\varphi(x)\Big)\frac{\dif z}{|z|^{d+\alpha}},
$$
and
$$
\sL \varphi(x):=\int_{\mR^d_0}\Big(\varphi(x+g(x,z))-\varphi(x+g_\delta(x,z))\Big)\frac{\dif z}{|z|^{d+\alpha}}.
$$
Let $(\cT_t)_{t\geq 0}$ (resp. $(\cT^0_t)_{t\geq 0}$) be the semigroup associated with $\sA$ (resp. $\cL_0$). Then we have
\begin{align}\label{EQ2}
\p_t\cT_t \varphi=\sA\cT_t \varphi=\cL_0\cT_t \varphi+\sL\cT_t \varphi.
\end{align}
By Duhamel's formula, we have
\begin{align}\label{KP1}
\cT_t \varphi=\cT^0_t\varphi+\int^t_0\cT^0_{t-s}\sL \cT_s\varphi\dif s.
\end{align}
Notice that under {\bf (H$'_1$)} and {\bf (H$^{\rm uni}_{\bf or}$)}, \eqref{KJ99} and \eqref{KJ9} hold for $R=\infty$.

For $\beta\geq 0$ and $p\in(1,\infty)$, let $\mH^{\beta,p}:=(I-\Delta)^{-\frac{\beta}{2}}(L^p(\mR^d))$ be the usual Bessel potential space. 
It is well known that for any $k\in\mN$ and $p\in(1,\infty)$ (cf. \cite{St}), an equivalent norm in $\mH^{k,p}$ is given by
$$
\|\varphi\|_{k,p}=\sum_{j=0}^k\|\nabla^j \varphi\|_p.
$$ 
For a function $g:\mR^d\to\mR^d$, we introduce 
$$
T_g\varphi(x):=\varphi(x+g(x))-\varphi(x).
$$
\bl\label{Le61}
Let $m\in\mN_0$. Assume $g\in C^{m+1}_b$. For any $\theta\in(0,1)$ and $p>d/\theta$, there is a constant $C=C(p,\theta,m)>0$ such that
$$
\|T_g \varphi\|_{m,p}\leq C\|\varphi\|_{m+\theta,p}\sum_{|\alpha|\leq m}\left(\Pi_{j=1}^m\Big(1+\|\nabla^j g\|^{\alpha_j}_\infty\Big)\right)\|g\|_\infty^\theta,
$$
where $\alpha=(\alpha_1,\cdots,\alpha_m)$ and $|\alpha|=\alpha_1+\cdots+\alpha_m$.
In particular, we have
$$
\|T_g \varphi\|_{m+\beta,p}\leq C\|\varphi\|_{m+\beta+\theta,p}P(\|\nabla g\|_\infty,\cdots,\|\nabla^m g\|_\infty)
\left(1+\|\nabla^{m+1} g\|^{m+1}_\infty\right)^\beta\|g\|_\infty^\theta,
$$
where $P$ is a polynomial function of its arguments.
\el
\begin{proof}
Let $\theta\in(0,1)$. First of all, for $m=0$, we have
\begin{align*}
|T_g\varphi(x)|\leq \sup_{y\not=0}\frac{|\varphi(x+y)-\varphi(x)|}{|y|^\theta}|g(x)|^\theta.
\end{align*}
Recalling that for $p>d/\theta$ (see \cite[Lemma 5]{Mi}),
$$
\left\|\sup_{y\not=0}\frac{|\varphi(\cdot+y)-\varphi(\cdot)|}{|y|^\theta}\right\|_p\leq C\|\varphi\|_{\theta,p},
$$
we have
$$
\|T_g\varphi\|_p\leq C\|\varphi\|_{\theta,p}\|g\|_\infty^\theta.
$$
For $m\in\mN$, by the chain rule and induction, there is a constant $C>0$ such that for all $x\in\mR^d$,
$$
|\nabla^m T_g\varphi(x)|\leq C\sum_{|\alpha|\leq m}\left(\Pi_{j=1}^n\|\mI+\nabla^j g\|^{\alpha_j}_\infty\right)\sum_{j=1}^m|\nabla^j \varphi|(x+g(x)).
$$
As above one sees that for any $p>d/\theta$,
$$
\|\nabla^m T_g\varphi\|_p\leq C\sum_{|\alpha|\leq m}\left(\Pi_{j=1}^n(1+\|\nabla^j g\|^{\alpha_j}_\infty)\right)\sum_{j=1}^m\|\nabla^j \varphi\|_{\theta,p}\|g\|_\infty^\theta.
$$ 
The first estimate follows. As for the second estimate, it follows by interpolation.
\end{proof}

\bc
Let $\alpha\in(0,2)$ and $\beta\in(0,\alpha)$. For $\theta\in(0,1)$ with $\beta+\theta\in(0,\alpha)$ and $p>d/\theta$,
there is a constant $C>0$ such that for all $\varphi\in \mH^{\beta+\theta,p}$,
\begin{align}\label{KP2}
\|\sL \varphi\|_{\beta,p}\leq C\|\varphi\|_{\beta+\theta,p}.
\end{align}
Moreover, if the support of $g(x,\cdot)$ is contained in a ball $B_R$ for all $x\in\mR^d$, 
then the above estimate holds for all $\beta\geq 0$.
\ec
\begin{proof}
Notice that for any $j=0,1,\cdots$,
$$
\|\nabla^j_x g(\cdot,z)\|_\infty\leq C|z|,\ \ z\in\mR^d,
$$
and
$$
\sL\varphi(x)=\int_{|z|>\delta}\Big(\varphi(x+g(x,z))-\varphi(x+g_\delta(x,z))\Big)\frac{\dif z}{|z|^{d+\alpha}}.
$$
By Lemma \ref{Le61} we have
\begin{align*}
\|\sL \varphi\|_{\beta, p}&\leq\int_{|z|>\delta}\Big(\|T_{g(\cdot,z)}\varphi\|_{\beta,p}
+\|T_{g_\delta(\cdot,z)}\varphi\|_{\beta,p}\Big) \frac{\dif z}{|z|^{d+\alpha}}\\
&\leq C\|\varphi\|_{\beta+\theta,p}\int_{|z|>\delta}\frac{|z|^{\beta+\theta}}{|z|^{d+\alpha}}\dif z\leq C\|\varphi\|_{\beta+\theta,p},
\end{align*}
where the last step is due to $\beta+\theta<\alpha$.
If the support of $g(x,\cdot)$ is contained in a ball $B_R$ for all $x\in\mR^d$, then
$$
\sL\varphi(x)=\int_{\delta<|z|\leq R}\Big(\varphi(x+g(x,z))-\varphi(x+g_\delta(x,z))\Big)\frac{\dif z}{|z|^{d+\alpha}}.
$$
As above, \eqref{KP2} is direct by  Lemma \ref{Le61}.
\end{proof}

The following results are proven in \cite[Lemmas 5.2 and 5.3]{Zhang17}.
\bl
Let $\gamma_{10}$ and $\gamma_{01}$ be the same as in Theorem \ref{Th45}.
Under {\bf (H$'_1$)} and {\bf (H$^{\rm uni}_{\bf or}$)}, for any $p\in(1,\infty)$, $\theta,\vartheta\in[0,1)$ and $\beta\geq 0$, 
there exists a constant $C>0$ such that for all $\varphi\in C^\infty_0(\mR^d)$ and $t\in(0,1)$,
\begin{align}
&\|\cT^0_t\varphi\|_{\theta+\beta,p}\leq C t^{-\theta\gamma_{10}}\|\varphi\|_{\beta,p},\label{NJ01}\\
&\|\cT^0_t\Delta^{\frac{\vartheta}{2}}\varphi\|_p\leq C t^{-\vartheta\gamma_{01}}\|\varphi\|_p.\label{NJ1}
\end{align}
\el
Now we can show the following key estimate.
\bl
Let $\gamma_{10}, \gamma_{01}$ be as in Theorem \ref{Th45}. Fix $\vartheta\in[0,\frac{1}{\gamma_{01}}\wedge 1)$. 
Under {\bf (H$'_1$)}, {\bf (H$^{\rm uni}_{\bf or}$)} and $g\in C^\infty_b(\mR^d\times B^c_1)$, 
there exist $\beta>\alpha$, $p_0\geq 1$ and constant $C>0$ such that for all $t\in(0,1)$, $p>p_0$ and $\varphi\in C^\infty_0(\mR^d)$,
\begin{align}
\|\cT_t\Delta^{\frac{\vartheta}{2}}\varphi\|_{\beta,p}\leq Ct^{-\beta\gamma_{10}-\vartheta\gamma_{01}}\|\varphi\|_p.\label{KJ1}
\end{align}
Moreover, if the support of $g(x,\cdot)$ is contained in a ball $B_R$ for all $x\in\mR^d$, then the $\beta$ in \eqref{KJ1}
can be any positive number.
\el
\begin{proof}
Choose $\theta\in(0,\tfrac{1}{2\gamma_{10}}\wedge 1\wedge\alpha)$ and $M\in\mN$ such that
$$
M\theta<\alpha,\ \ \beta:=(M+1)\theta>\alpha.
$$ 
Let $p>d/\theta$. For $m=1,\cdots, M$,  we have
\begin{align*}
\|\cT_t\varphi\|_{(m+1)\theta,p}&\leq\|\cT^0_t\varphi\|_{(m+1)\theta,p}+\int^t_0\|\cT^0_{t-s}\sL\cT_s\varphi\|_{(m+1)\theta,p}\dif s\\
&\stackrel{\eqref{NJ01}}{\leq} Ct^{-\theta\gamma_{10}}\|\varphi\|_{m\theta,p}+C\int^t_0(t-s)^{-2\theta\gamma_{10}}\|\sL\cT_s\varphi\|_{(m-1)\theta,p}\dif s\\
&\stackrel{\eqref{KP2}}{\leq} Ct^{-\theta\gamma_{10}}\|\varphi\|_{m\theta,p}+C\int^t_0(t-s)^{-2\theta\gamma_{10}}\|\cT_s\varphi\|_{m\theta,p}\dif s\\
&\leq Ct^{-\theta\gamma_{10}}\|\varphi\|_{m\theta,p}+C\int^t_0(t-s)^{-2\theta\gamma_{10}}\|\cT_s\varphi\|_{(m+1)\theta,p}\dif s,
\end{align*}
which, by Gronwall's inequality, yields that for  all $t\in(0,1)$,
\begin{align}\label{KP3}
\|\cT_t\varphi\|_{(m+1)\theta,p}\leq Ct^{-\theta\gamma_{10}}\|\varphi\|_{m\theta, p}.
\end{align}
Thus, by the semigroup property of $\cT_t$ and iteration, we obtain that for $m=1,\cdots,M$,
\begin{align}\label{EB1}
\|\cT_{(m+1)t}\varphi\|_{(m+1)\theta,p}\leq Ct^{-\theta\gamma_{10}}\|\cT_{mt}\varphi\|_{m\theta,p}\leq\cdots
\leq Ct^{-(m+1)\theta\gamma_{10}}\|\cT_t\varphi\|_{p}.
\end{align}
On the other hand, by \eqref{KP1}, \eqref{KP2} and (\ref{NJ1}), we have
\begin{align*}
\|\cT_t\Delta^{\frac{\vartheta}{2}}\varphi\|_{p}&\leq\|\cT^0_t\Delta^{\frac{\vartheta}{2}}\varphi\|_{p}
+\int^t_0\|\cT^0_{t-s}\sL\cT_s\Delta^{\frac{\vartheta}{2}}\varphi\|_{p}\dif s\\
&\leq Ct^{-\vartheta\gamma_{01}}\|\varphi\|_{p}+C\int^t_0\|\cT_s\Delta^{\frac{\vartheta}{2}}\varphi\|_{\theta,p}\dif s\\
&\leq Ct^{-\vartheta\gamma_{01}}\|\varphi\|_{p}+C\int^{2t}_0\|\cT_s\Delta^{\frac{\vartheta}{2}}\varphi\|_{\theta,p}\dif s\\
&=Ct^{-\vartheta\gamma_{01}}\|\varphi\|_{p}+2C\int^{t}_0\|\cT_{2s}\Delta^{\frac{\vartheta}{2}}\varphi\|_{\theta,p}\dif s\\
&=Ct^{-\vartheta\gamma_{01}}\|\varphi\|_{p}+2C\int^{t}_0\|\cT_{s}\cT_s\Delta^{\frac{\vartheta}{2}}\varphi\|_{\theta,p}\dif s\\
&\stackrel{\eqref{KP3}}{\leq} Ct^{-\vartheta\gamma_{01}}\|\varphi\|_{p}+C\int^t_0s^{-\theta\gamma_{10}}\|\cT_s\Delta^{\frac{\vartheta}{2}}\varphi\|_{p}\dif s,
\end{align*}
which, by Gronwall's inequality, yields that for  all $t\in(0,1)$,
\begin{align}\label{EB2}
\|\cT_t\Delta^{\frac{\vartheta}{2}}\varphi\|_p\leq Ct^{-\vartheta\gamma_{01}}\|\varphi\|_{p}.
\end{align}
Combining \eqref{EB1} with \eqref{EB2}, we obtain \eqref{KJ1}.

Finally, if the support of $g(x,\cdot)$ is contained in a ball $B_R$ for all $x\in\mR^d$, then the $m$ in \eqref{EB1}
can be any natural number.
\end{proof}

Now we can give
\begin{proof}[Proof of Theorem \ref{Th2}]
Without loss of generality, we assume $t\in(0,1)$. 
Let $\eps\in(0,\beta-\alpha)$ and $p>d/\eps\vee p_0$.
For any $\varphi\in L^p(\mR^d)$, by \eqref{KJ1}
and Sobolev's embedding theorem, we have $(\mI-\Delta)^{\frac{\alpha+\eps}{2}}\cT_t\varphi\in C_b(\mR^d)$ 
and for any $t\in(0,1)$ and $\vartheta\in[0,\frac{1}{\gamma_{01}}\wedge 1)$,
\begin{align}
\|(\mI-\Delta)^{\frac{\alpha+\eps}{2}}\cT_t\Delta^{\frac{\vartheta}{2}}\varphi\|_\infty\leq
C\|\cT_t\Delta^{\frac{\vartheta}{2}}\varphi\|_{\beta,p}
\leq Ct^{-\beta\gamma_{10}-\vartheta\gamma_{01}}\|\varphi\|_p.\label{KG1}
\end{align}
In particular, for each $t,x$, there is a function $\rho_t(x,\cdot)\in L^{\frac{p}{p-1}}(\mR^d)$ such that for any $\varphi\in L^p(\mR^d)$,
$$
\cT_t\varphi(x)=\int_{\mR^d}\varphi(y)\rho_t(x,y)\dif y.
$$
Moreover, we also have
\begin{align*}
\sup_{x\in\mR^d}\|(\mI-\Delta)^{\frac{\alpha+\eps}{2}}_x\Delta^{\frac{\vartheta}{2}}_y\rho_t(x,\cdot)\|_{\frac{p}{p-1}}
&=\sup_{x\in\mR^d}\sup_{\varphi\in C^\infty_0(\mR^d), \|\varphi\|_p\leq 1}\left|\int\varphi(y)(\mI-\Delta)^{\frac{\alpha+\eps}{2}}_x\Delta^{\frac{\vartheta}{2}}_y\rho_t(x,y)\dif y\right|\\
&=\sup_{x\in\mR^d}\sup_{\varphi\in C^\infty_0(\mR^d), \|\varphi\|_p\leq 1}\left|\int\Delta^{\frac{\vartheta}{2}}_y\varphi(y)(\mI-\Delta)^{\frac{\alpha+\eps}{2}}_x\rho_t(x,y)\dif y\right|\\
&=\sup_{x\in\mR^d}\sup_{\varphi\in C^\infty_0(\mR^d), \|\varphi\|_p\leq 1}|(\mI-\Delta)^{\frac{\alpha+\eps}{2}}_x\cT_t\Delta^{\frac{\vartheta}{2}}\varphi(x)|\\
&=\sup_{\varphi\in C^\infty_0(\mR^d), \|\varphi\|_p\leq 1}\|(\mI-\Delta)^{\frac{\alpha+\eps}{2}}_x\cT_t
\Delta^{\frac{\vartheta}{2}}\varphi\|_\infty\leq Ct^{-\beta\gamma_{10}-\vartheta\gamma_{01}}.
\end{align*}
Thus we obtain (ii). As for (iii), it follows by \eqref{KJ1} for all $\beta\geq 0$.
\end{proof}

\section{Application to nonlocal kinetic operators}

Consider the following nonlocal kinetic operator:
$$
\sK u(x,\v)=\sL u(x,\v)+{\rm v}\cdot\nabla_x u(x,{\rm v})+b(x,{\rm v})\cdot\nabla_{\rm v} u(x,{\rm v}),
$$
where
$$
\sL u(x,\v):={\rm p.v}\int_{\mR^d}(u(x,{\rm v}+w)-u(x,{\rm v}))\frac{\kappa(x,{\rm v},w)}{|w|^{d+\alpha}}\dif w.
$$
Here $\kappa$ and $b$ satisfy the following assumptions:
\begin{enumerate}[(A)]
\item[{\bf (A)}] $b\in C^\infty_b(\mR^{2d})$, and $\kappa\in C^\infty_b(\mR^{3d})$ satisfies that for some $\kappa_0>1$,
$$
\kappa_0^{-1}\leq\kappa(x,{\rm v},w)\leq\kappa_0,\ \ \kappa(x,\v,-w)=\kappa(x,\v,w).
$$ 
Moreover, for any $i,j\in\mN_0$, there is a constant $C_{ij}>0$ such that
$$
|\nabla^{i+1}_x\nabla^j_{\rm v}\kappa(x,{\rm v},w)|\leq C_{ij}/(1+|\v|^2).
$$
\end{enumerate}
The aim of this section is to use Theorem \ref{Main} to show the following result.
\bt\label{Th71}
Under {\bf (A)}, there is a nonnegative continuous function $\rho_t(x,\v,y,w)$ on $\mR_+\times\mR^d\times\mR^d\times\mR^d\times\mR^d$
such that for each $t,y,w$, the map $(x,\v)\mapsto\rho_t(x,\v,y,w)$ belongs to $C^\infty_b(\mR^{2d})$, and for each $t>0$ and $y,w\in\mR^d$,
$$
\p_t\rho_t(x,\v,y,w)=\sK\rho_t(\cdot,\cdot,y,w)(x,\v).
$$
\et
The key point for us is the following transform lemma.
\bl\label{Le71}
Given $R\in(0,\infty)$ and $\kappa_0\in[1,\infty)$, let $\kappa(z):B_R\to[\kappa_0^{-1},\kappa_0]$ be a measurable function, where $B_R:=\{x\in\mR^d: |x|<R\}$. 
For any $\alpha\in(0,2)$, there is a homeomorphism $\Phi: B_R\to B_R$ such that for any nonnegative measurable function $f$,
\begin{align}\label{QW1}
\int_{B_R}f\circ\Phi(z)\frac{\dif z}{|z|^{d+\alpha}}=\int_{B_R}f(z)\frac{\kappa(z)}{|z|^{d+\alpha}}\dif z.
\end{align}
Moreover, we have the following properties about $\Phi$:
\begin{enumerate}[(i)]
\item $\Phi(0)=0$ and if $\kappa(-z)=\kappa(z)$, then $\Phi(-z)=-\Phi(z)$.

\item If $\kappa$ is continuous at $0$, then $\Phi$ is differentiable at point zero and
$$
\nabla\Phi(0)=\kappa(0)^{1/\alpha}\mI.
$$

\item If $\kappa\in C^1(B_R)$, then for some $C=C(\kappa_0,\|\nabla\kappa\|_\infty,\alpha, R)>0$ and any $z\in B_R$,
$$
|\nabla\Phi(z)-\nabla\Phi(0)|\leq\left\{
\begin{aligned}
&C|z|^\alpha,\ \ &\alpha\in(0,1),\\
&C|z|\log^+|z|,\ \ &\alpha=1,\\
&C|z|,&\alpha\in(1,2).
\end{aligned}
\right.
$$
\item If $\kappa\in C^j(B_R)$ for some $j\in\mN$, then for some $C_j=C_j(R)$ and any $z\in B_R$,
$$
|\nabla^j\Phi(z)|\leq C_j|z|^{1-j}.
$$
\end{enumerate}
\el
\begin{proof}
Using the spherical coordinate transform, \eqref{QW1} is equivalent to 
\begin{align}\label{NN1}
\int^R_0\!\!\!\int_{\mS^{d-1}}f\circ\Phi(t\omega)\dif\omega\frac{\dif t}{|t|^{1+\alpha}}
=\int^R_0\!\!\!\int_{\mS^{d-1}}f(t\omega)\kappa(t\omega)\dif\omega\frac{\dif t}{|t|^{1+\alpha}},
\end{align}
where $\mS^{d-1}:=\{\omega: |\omega|=1\}$ is the unit sphere in $\mR^d$.
Given $\omega\in\mS^{d-1}$, let $\phi(\cdot,\omega)$ and $\psi(\cdot,\omega)$ be defined by the following identity respectively:
\begin{align}\label{NN3}
\int^R_{\phi(r,\omega)}\frac{\dif t}{t^{1+\alpha}}=\int^R_r\frac{\kappa(t\omega)\dif t}{t^{1+\alpha}},\quad 
\int^R_r\frac{\dif t}{t^{1+\alpha}}=\int^R_{\psi(r,\omega)}\frac{\kappa(t\omega)\dif t}{t^{1+\alpha}}.
\end{align}
Since $\kappa_0^{-1}\leq\kappa(z)\leq\kappa_0$, it is easy to see that $\phi(\cdot,\omega), \psi(\cdot,\omega):[0,R]\to[0,R]$ 
are strictly increasing  continuous functions and have the following properties:
$$
\phi(0,\omega)=\psi(0,\omega)=0,\ \phi(R,\omega)=\psi(R,\omega)=R
$$
and
\begin{align}\label{NN6}
\phi(\psi(r,\omega),\omega)=\psi(\phi(r,\omega),\omega)=r.
\end{align}
Moreover, 
\begin{align}\label{NN7}
\kappa_0^{-1/\alpha} r\leq \phi(r,\omega)\leq \kappa^{1/\alpha}_0 r.
\end{align}
Indeed, by \eqref{NN3} and $\kappa^{-1}_0\leq \kappa(t\omega)\leq\kappa_0$ with $\kappa_0\geq 1$, we have
$$
\phi(r,\omega)^{-\alpha}\leq R^{-\alpha}+\kappa_0(r^{-\alpha}-R^{-\alpha})\leq\kappa_0 r^{-\alpha}\Rightarrow \phi(r,\omega)\geq\kappa_0^{-1/\alpha} r
$$
and
$$
\phi(r,\omega)^{-\alpha}\geq R^{-\alpha}+\kappa^{-1}_0(r^{-\alpha}-R^{-\alpha})\geq\kappa^{-1}_0 r^{-\alpha}\Rightarrow \phi(r,\omega)\leq\kappa_0^{1/\alpha} r.
$$
In particular, by a standard monotone class argument, it holds that for all nonnegative measurable function $g:[0,R)\to[0,\infty)$,
\begin{align}\label{NN2}
\int^R_0 g(\psi(t,\omega))\frac{\dif t}{t^{1+\alpha}}=\int^R_0 g(t)\frac{\kappa(t\omega)}{t^{1+\alpha}}\dif t.
\end{align}
Now let us define
$$
a(z):=\psi(|z|,z/|z|)/|z|,\ \ \Phi(z):=a(z)\cdot z.
$$
By \eqref{NN2}, one sees that \eqref{NN1} holds for the above $\Phi$, and by \eqref{NN6} and \eqref{NN7},
\begin{align}\label{NN9}
\kappa_0^{-1/\alpha}\leq a(z)\leq \kappa_0^{1/\alpha},\ \ \forall z\in B_R.
\end{align}
(i) From the above construction of $\Phi$, it is easy to see that $\Phi(0)=0$ and  
$$
\kappa(-z)=\kappa(z)\Rightarrow\Phi(-z)=-\Phi(z).
$$ 
(ii) We assume $\kappa(z)$ is continuous at $0$. We have the following claim:
\begin{align}\label{NN8}
a(0):=\lim_{z\to 0}a(z)=\kappa(0)^{1/\alpha},\ \nabla\Phi(0)=a(0)\mI.
\end{align}
For $i=1,\cdots,d$, let $\e_i=(0,\cdots, 1,\cdots, 0)$. Noticing that $\Phi(0)=0$, we have
$$
\p_i\Phi^j(0)=\lim_{\eps\to 0}\Phi^j(\eps \e_i)/\eps=1_{i=j}\lim_{\eps\to 0}a(\eps \e_i).
$$
To prove \eqref{NN8}, it suffices to show that
\begin{align}\label{LJ1}
0=\lim_{z\to 0}|a(z)^\alpha-\kappa(0)|=\lim_{r\to 0}\sup_{|\omega|=1}|(\psi(r,\omega)/r)^\alpha-\kappa(0)|.
\end{align}
From \eqref{NN3}, one sees that
\begin{align}\label{LJ2}
\phi(r,\omega)=\left[\alpha\int^R_r\frac{\kappa(t\omega)}{t^{1+\alpha}}\dif t+R^{-\alpha}\right]^{-1/\alpha},
\end{align}
which implies that
$$
\left(r/\phi(r,\omega)\right)^{\alpha}=\alpha r^\alpha\int^R_r\frac{\kappa(t\omega)}{t^{1+\alpha}}\dif t+r^\alpha R^{-\alpha}.
$$
Hence, by \eqref{NN6},
\begin{align*}
\lim_{r\to 0}\sup_{|\omega|=1}|\left(\psi(r,\omega)/r\right)^\alpha-\kappa(0)|&=\lim_{r\to 0}\sup_{|\omega|=1}|\left(r/\phi(r,\omega)\right)^{\alpha}-\kappa(0)|\\
&=\lim_{r\to 0}\sup_{|\omega|=1}\left(\alpha r^\alpha\int^R_r\frac{|\kappa(t\omega)-\kappa(0)|}{t^{1+\alpha}}\dif t\right)\\
&=\lim_{r\to 0}\sup_{|\omega|=1} \int^{1}_{(r/R)^\alpha}|\kappa(r\omega/s^{1/\alpha})-\kappa(0)|\dif s=0,
\end{align*}
where the last step is due to the dominated convergence theorem. Thus we get \eqref{LJ1}.\\
\\
(iii) By definition  \eqref{NN3} and the change of variable $s=t/|z|$, we get
$$
\int^R_{a(z)|z|}\frac{\kappa(tz/|z|)\dif t}{t^{1+\alpha}}=\int^R_{|z|}\frac{\dif t}{t^{1+\alpha}}\Rightarrow
\int^{R/|z|}_{a(z)}\frac{\kappa(s z)\dif s}{s^{1+\alpha}}=\frac{1}{\alpha}\left(1-\frac{|z|^\alpha}{R^\alpha}\right).
$$
Assume $\kappa\in C^1(B_R)$. Taking the gradient for both sides with respect to $z$, we obtain
$$
\nabla a(z)=\frac{a(z)^{1+\alpha}}{\kappa(a(z)z)}\left(\int^{R/|z|}_{a(z)}\frac{\nabla \kappa(sz)}{s^\alpha}\dif s+\frac{|z|^{\alpha-2}z}{R^\alpha}(1-\kappa(Rz/|z|))\right).
$$
Since $\kappa\in[\kappa_0^{-1},\kappa_0]$, by \eqref{NN9} and elementary calculations, we have
\begin{align}\label{NN10}
|\nabla a(z)|\leq 
\left\{
\begin{aligned}
&\kappa_0^{2+1/\alpha}\left(\|\nabla\kappa\|_\infty R^{1-\alpha}/(1-\alpha)+(1+\kappa_0)R^{-\alpha}\right)|z|^{\alpha-1},\ \ & \alpha\in(0,1),\\
&\kappa_0^3\Big(\|\nabla\kappa\|_\infty\log(R\kappa_0/|z|)+(1+\kappa_0)R^{-1}\Big),\ \ &\alpha=1,\\
&\kappa_0^3\|\nabla\kappa\|_\infty/(\alpha-1)+\kappa_0^{2+1/\alpha}(1+\kappa_0)R^{-1},\ \ &\alpha\in(1,2).
\end{aligned}
\right.
\end{align}
Finally, noticing that
$$
\p_i\Phi^j(z)=z_j\p_i a(z)+1_{i=j}a(z),\ \ \p_i\Phi^j(0)=\kappa(0)^{1/\alpha}=1_{i=j}a(0),
$$
we have
$$
|\p_i\Phi^j(z)-\p_i\Phi^j(0)|\leq |z_j|\cdot|\p_i a(z)|+1_{i=j}|a(z)-a(0)|,
$$
which together with \eqref{NN10} gives the desired estimate.\\
\\
(iv) Let $\Phi^{-1}$ be the inverse of $\Phi$. We have 
$$
(\nabla\Phi)\circ\Phi^{-1}\cdot\nabla\Phi^{-1}=\mI\Rightarrow \nabla\Phi=(\nabla\Phi^{-1})^{-1}\circ\Phi.
$$
Therefore,
$$
\nabla\Phi^{-1}\cdot(\nabla^2\Phi)\circ\Phi^{-1}\cdot\nabla\Phi^{-1}+(\nabla\Phi)\circ\Phi^{-1}\cdot\nabla^2\Phi^{-1}=0,
$$
and
$$
|\nabla^2\Phi|\leq|\nabla\Phi|^3\cdot|\nabla^2\Phi^{-1}\circ\Phi|.
$$
By induction method, there is a $C=C(j,d)>0$ such that
\begin{align}\label{LU1}
|\nabla^j\Phi|\leq C\sum_{\gamma\in\sA}|\nabla\Phi|^{1+\sum_{i=2}^j i\gamma_i}\prod_{i=2}^j\big|(\nabla^i\Phi^{-1})\circ\Phi\big|^{\gamma_i},
\end{align}
where 
$$
\sA:=\left\{\gamma=(\gamma_2,\cdots,\gamma_j): \sum_{i=2}^j(i-1)\gamma_i=j-1\right\}.
$$
By (ii), one sees that
\begin{align}\label{LU2}
\|\nabla\Phi\|_\infty\leq C_R.
\end{align}
On the other hand, by definition, it is easy to see that
$$
\Phi^{-1}(z)=\phi(|z|,z/|z|)z/|z|=:h(z) g(z),
$$
where $g(z):=z/|z|$ and by \eqref{LJ2},
$$
h(z):=\phi(|z|,z/|z|)=\left[\alpha\int^R_{|z|}\frac{\kappa(tz/|z|)}{t^{1+\alpha}}\dif t+R^{-\alpha}\right]^{-1/\alpha}.
$$
By elementary calculations, one finds that
$$
|\nabla^j h(z)|\leq C|z|^{1-j},\ \ |\nabla^j g(z)|\leq C|z|^{-j},
$$
and so,
\begin{align}\label{LU3}
|\nabla^j\Phi^{-1}(z)|\leq C|z|^{1-j}.
\end{align}
Substituting \eqref{LU3} and \eqref{LU2} into \eqref{LU1}, and by \eqref{NN9} and $\Phi(z)=a(z)\cdot z$, we obtain (iii).
\end{proof}
\bc\label{Cor72}
Given $R\in(0,\infty)$, $\kappa_0\in[1,\infty)$ and $d_0\in\mN$, let $\kappa(x,z):\mR^{d_0}\times B_R\to[\kappa_0^{-1},\kappa_0]$ be a measurable function. 
For any $\alpha\in(0,2)$, there is a map $\Phi(x,z): \mR^{d_0}\times B_R\to B_R$ such that for any nonnegative measurable function $f$,
$$
\int_{B_R}f\circ\Phi(x,z)\frac{\dif z}{|z|^{d+\alpha}}=\int_{B_R}f(z)\frac{\kappa(x,z)}{|z|^{d+\alpha}}\dif z.
$$
Moreover, $\Phi$ enjoys the following properties:
\begin{enumerate}[(i)]
\item $\Phi(x,0)=0$ and if $\kappa(x,-z)=\kappa(x,z)$, then $\Phi(x,-z)=-\Phi(x,z)$.
\item For $x\in\mR^{d_0}$, if $\kappa(x,\cdot)$ is continuous at $0$, then $\Phi(x,\cdot)$ is differentiable at point zero and
$$
\nabla_z \Phi(x,0)=\kappa(x,0)\mI.
$$
\item If $\kappa(x,\cdot)\in C^1(B_R)$ and $\|\nabla_z\kappa\|_\infty<\infty$, 
then there are $\beta\in(0,1)$ and $C>0$ such that for all $x\in\mR^{d_0}$ and $z\in B_R$,
$$
|\nabla_z \Phi(x,z)-\nabla_z \Phi(x,0)|\leq C|z|^\beta.
$$
\item If $\kappa\in C^\infty_b(\mR^{d_0}\times B_R)$, then for all $i,j\in\mN_0$, there is a $C_{ij}>0$ such that for all $x\in\mR^{d_0}$ and $z\in B_R$,
$$
|\nabla_x^i\nabla_z^j\Phi(x,z)|\leq C_{ij}|z|^{1-j},
$$
where $C_{ij}$ is a polynomial of $\|\nabla^m_x\nabla^n_z\kappa\|_\infty$, $m=1,\cdots$, $i, n=0,\cdots,j$.
\end{enumerate}
\ec
\begin{proof}
(i), (ii) and (iii) follow by (i), (ii) and (iii) of Lemma \ref{Le71}. As for (iv), it follows by similar calculations as in the proof of (iv) of Lemma \ref{Le71}.
\end{proof}

Now we can give the proof of Theorem \ref{Th71}.

\begin{proof}[Proof of Theorem \ref{Th71}]
Fix $\delta\in(0,1)$ being small. Define
$$
\sL_0 u(x,\v):={\rm p.v}\int_{|w|<\delta}(u(x,{\rm v}+w)-u(x,{\rm v}))\frac{\kappa(x,{\rm v},w)}{|w|^{d+\alpha}}\dif w,
$$
and
$$
\sK_0 u(x,\v):=\sL_0 u(x,\v)+{\rm v}\cdot\nabla_x u(x,{\rm v})+b(x,{\rm v})\cdot\nabla_{\rm v} u(x,{\rm v}).
$$
Then we can write
$$
\sK u(x,\v)=\sK_0u(x,\v)+\sL_1u(x,\v),
$$
where
$$
\sL_1 u(x,\v):=\int_{|w|\geq \delta}(u(x,{\rm v}+w)-u(x,{\rm v}))\frac{\kappa(x,{\rm v},w)}{|w|^{d+\alpha}}\dif w.
$$
For each $m$ and $p\geq 1$, by the chain rule, it is easy to see that
$$
\|\sL_1 u\|_{m,p}\leq C\|u\|_{m,p}.
$$
On the other hand, by Corollary \ref{Cor72}, there exists a function $g(x,\v,\cdot):B_\delta\to B_\delta$ so that
$$
\sL_0 u(x,\v)={\rm p.v}\int_{|w|<\delta}(u(x,{\rm v}+g(x,\v,w))-u(x,{\rm v}))\frac{\dif w}{|w|^{d+\alpha}},
$$
and operator $\sK_0$ satisfies {\bf (H$'_2$)}$+${\bf (H$^{\rm o}_g$)}$+${\bf (H$^{\rm uni}_{\bf or}$)}.
The desired result follows by Theorem \ref{Main}.
\end{proof}

\end{document}